\documentclass[11pt]{article}
\usepackage{amssymb,amsfonts,amsmath, psfrag}
\usepackage{graphicx, colordvi, cite}
\usepackage{mathrsfs}
\usepackage{rotating}
\usepackage{hyperref}
\usepackage{lmodern}
\usepackage{bm}
\usepackage{amsthm}
\usepackage{url}
\newtheorem{theo}{Theorem}
\newtheorem{prop}[theo]{Proposition}

\newtheorem{lem}[theo]{Lemma}

\makeatletter \@addtoreset{equation}{section}
\@addtoreset{theo}{section} \makeatother

\usepackage{geometry}

\begin{document}
\begin{center}
{\bf \large On a question about pattern avoidance of cyclic permutations}
\vspace{0.3cm}

\end{center}
\begin{center}

 Zuo-Ru Zhang, \ Hongkuan Zhao\\[8pt]
School of Mathematical Science\\
Hebei Normal University\\
 Shijiazhuang 050024, P. R. China\\[6pt]

{\tt  zrzhang@hebtu.edu.cn}

\end{center}

\vspace{0.3cm} \noindent{\bf Abstract.}
Recently, Archer et al.\ studied cyclic permutations that avoid the decreasing pattern $\delta_k=k(k-1)\cdots21$ in one-line notation and avoid another pattern $\tau$ of length $4$ in all their cycle forms.
There are three cases in total to consider, namely, $\tau=1324, 1342$ and $1432$.
They determined two of them, leaving the case $\tau=1432$ as an open question.
In this paper, we resolve this case by deriving explicit formulas based on an analysis of the structure of cycle forms and an application of Dilworth's theorem.

\vspace{0.3cm} 
\noindent{\bf Keywords: } Cyclic permutations $\cdot$ Pattern avoidance $\cdot$ Dilworth's theorem

\noindent {\bf AMS Classification 2020:}
05A05, 05A15
\section{Introduction}
Let $\mathfrak{S}_n$ denote the symmetric group on $[n]=\{1,2,\ldots,n\}$.
Write $\pi\in\mathfrak{S}_n$ in one-line notation as $\pi=\pi_1\pi_2\cdots\pi_n$, where $\pi_i=\pi(i)$.
Given $\sigma=\sigma_1\sigma_2\cdots\sigma_k\in\mathfrak{S}_k$, we say that $\pi$ \emph{contains} $\sigma$ if there exist indices
$i_1<i_2<\cdots<i_k$ such that the subsequence $\pi_{i_1}\pi_{i_2}\cdots\pi_{i_k}$ has the same relative order as $\sigma$.
Otherwise, $\pi$ \emph{avoids} $\sigma$.
See \cite{Knuth,Simion} for more details.

A permutation $\pi\in\mathfrak{S}_n$ is \emph{cyclic} if it is composed of a single $n$-cycle when written as a product of disjoint cycles.
In this case we may write $\pi=(c_1,c_2,\ldots,c_n)$ in cycle notation, and any cyclic rotation represents the same permutation.
We fix the representative that starts with $1$ and call it the \emph{standard cycle form}, $C(\pi)=(1,c_2,\ldots,c_n)$.
For $1\le i\le n$, we say that the cyclic rotation $(c_i,c_{i+1},\ldots,c_n,c_1,\ldots,c_{i-1})$ avoids a pattern $\sigma$ if $c_ic_{i+1}\cdots c_nc_1\cdots c_{i-1}$ avoids $\sigma$.
See \cite{Callan,Vella} for more details.

In \cite{Archer1}, Archer et al.\ studied cyclic permutations under simultaneous pattern restrictions in one-line notation and in the standard cycle form.
In \cite{Archer2}, they considered the decreasing pattern $\delta_k:=k(k-1)\cdots21$ in one-line notation together with another avoidance condition: all cycle forms (i.e. all cyclic rotations of the standard cycle form) avoid a given pattern $\tau$ of length $4$.
Up to symmetry and cyclic rotation, there are three cases to consider.
They enumerated the cases $\tau=1324$ and $\tau=1342$, leaving the case $\tau=1432$ open.
Related conjectures and special cases were studied by Pan \cite{PanJ} and Pan \cite{PanQ}, leading to Pell, Tetranacci and Padovan enumerations in several cases.
The purpose of this paper is to settle the open case $\tau=1432$.

Let $\mathcal{A}_{n}^\circ(\delta_k;1432)$ denote the set of cyclic permutations $\pi \in \mathfrak{S}_n$ such that the one-line notation avoids $\delta_k$ and all cyclic rotations of $C(\pi)$ avoid $1432$.
Set $a_n^\circ(\delta_k;1432)=|\mathcal{A}_{n}^\circ(\delta_k;1432)|$.
Furthermore, define
\[
\mathcal{A}_{n,j}^\circ(\delta_k;1432)=\{\pi \in \mathcal{A}_n^\circ(\delta_k;1432) \mid C(\pi)=(1,j,c_3,\ldots,c_n)\}.
\]
Denote $a_{n,j}^\circ(\delta_k;1432)= |\mathcal{A}_{n,j}^\circ(\delta_k;1432)|$.
Then
\begin{equation}\label{eq1.1}
a_n^\circ(\delta_k;1432)=\sum_{j=2}^n a_{n,j}^\circ(\delta_k;1432).
\end{equation}
For $k \in \{3,4\}$, we apply a standard consequence of Dilworth's theorem \cite{Dilworth} when needed to exclude occurrences of $\delta_k$ in one-line notation.
For $k\geq5$, it turns out that the one-line restriction becomes redundant.
We state our results in the form of the following three theorems:
\begin{theo}\label{th1.1}
For $n\ge 5$, there holds $a_n^\circ(\delta_3;1432)=\lfloor \frac{n^2+7}{2} \rfloor-2n$ (OEIS A061925).
\end{theo}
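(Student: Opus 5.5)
We plan to refine the count by the second entry $j$ of the standard cycle form, as in \eqref{eq1.1}, and to determine each $a_{n,j}^\circ(\delta_3;1432)$ explicitly. The first step is structural and uses only the cycle-form condition. All rotations of $C(\pi)=(1,c_2,\ldots,c_n)$ avoid $1432$. Reading from $1$, the entries after $1$ must avoid $321$. Rotating so that $1$ sits later, the same holds for every entry $c_i$ read together with the entries that follow it cyclically, restricted to values larger than $c_i$. So each cyclic window starting at a value $v$ has no decreasing triple among the larger values. We will use this to show that the cycle form is very rigid. Apart from a bounded number of ``exceptional'' entries, it is essentially increasing, with at most a short displaced block. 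Concretely, we will write $C(\pi)=(1,j,\ldots)$ and show that the entries $2,\ldots,j-1$ must appear in a prescribed order, increasing or nearly increasing. We will also show that the entries larger than $j$ can be interleaved with them in only a few ways.

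The second step imposes the one-line condition of avoiding $\delta_3=321$. By the consequence of Dilworth's theorem, this is equivalent to $\pi$ being a union of two increasing subsequences. For a cycle form that is a rotation-type shape, the one-line notation is easy to write down. For example, $(1,2,\ldots,n)$ gives $23\cdots n1$, and a cycle with one block shifted gives a permutation with a few descents. For each structural family from the first step, we will compute $\pi$ explicitly. We will then check whether it splits into two increasing sequences, which amounts to checking that no three positions form a $321$. This cuts each family down to a set parametrized by one or two integers, such as $j$ and the position of the displaced block, subject to linear inequalities.

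The third step is to count the survivors. We expect $a_{n,j}^\circ(\delta_3;1432)$ to be roughly linear in $n-j$, for example about $n-j$ or $\lfloor (n-j)/2\rfloor$ plus a constant, together with small boundary corrections for $j\in\{2,3,n\}$. Summing over $j$ via \eqref{eq1.1} gives a quadratic in $n$ with leading coefficient $\tfrac12$. A parity-dependent floor then yields $\lfloor (n^2+7)/2\rfloor-2n$. We will sanity-check the result against direct enumeration for $n=5,6,7$, where the formula gives $6,9,14$. We will then verify the recurrence $a_{n+1}-a_n=n-1$ for $n$ odd and $=n-2$ for $n$ even, or the corresponding difference in $n$, by exhibiting which new permutations appear when $n$ is inserted. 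The hypothesis $n\ge5$ suggests that small cases behave differently, because boundary families overlap.

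The main obstacle is the first step: obtaining a complete and non-overlapping classification of cycle forms all of whose rotations avoid $1432$, under the extra constraint from $321$-avoidance. What we will try first is to locate $n$ and $n-1$ in the cycle. Any entry smaller than both that precedes them cyclically, together with a smaller later entry, tends to create $1432$. We will use this to pin the position of large entries relative to $1$ and $j$. Then we will induct by deleting $n$ from the cycle and checking which insertions preserve both conditions.
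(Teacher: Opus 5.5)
\paragraph{Verdict.} The proposal has a genuine gap: the plan never supplies the mechanism that produces the count, and the mechanism it anticipates is wrong. What you have right is the frame. Your reformulation of the rotation condition (for every value $v$, the larger values read cyclically after $v$ avoid $321$) is correct, and refining by $j=c_2$ with a Dilworth check is the right setup.

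\paragraph{The structure and distribution you predict are wrong.} The cycle-form condition alone is not rigid. It is satisfied by $2^n+1-2n-\binom{n}{3}$ cyclic permutations. Already for $j=3$, every splitting of $\{4,\ldots,n\}$ into an increasing run between $3$ and $2$ and an increasing run after $2$ is allowed. Rigidity appears only after the one-line condition is imposed, and only for $j\ge3$:
\begin{itemize}
\item For $j=3$ one gets $a_{n,3}^\circ(\delta_3;1432)=\lfloor\frac{n-1}{2}\rfloor$. These are the forms $(1,3,5,\ldots,c_n+1,c_n+2,\ldots,n,2,4,\ldots,c_n)$ with $c_n$ even; an odd $c_n$ closes a short cycle.
\item For $j\ge4$ there is exactly one permutation, $(1,j,2,j+1,3,j+2,\ldots,j-2,2j-3,2j-2,\ldots,n,j-1)$, when $2j-3\le n$, and none otherwise.
\end{itemize}
So $a_{n,j}^\circ$ is not ``roughly linear in $n-j$''. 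Essentially the whole quadratic sits in $j=2$, which you treat as a boundary correction.

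\paragraph{The missing idea is the reduction for $j=2$.} Suppose $C(\pi)=(1,2,c_3,\ldots,c_n)$. Then $\pi_1=2$ cannot lie in any one-line $321$. Also, the letters $1,2$ at the front of the cycle form cannot lie in any $321$ or $2143$. Hence deleting $2$ and standardizing is a bijection onto $\mathcal{A}_{n-1}^\circ(\delta_3;1432)$. Combined with the counts above, this gives
\[
a_n^\circ=a_{n-1}^\circ+\lfloor\tfrac{n-1}{2}\rfloor+\lfloor\tfrac{n-3}{2}\rfloor .
\]

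\paragraph{Your induction on $n$ does not replace this.} Deleting $n$ does not preserve the one-line condition. For example, $(1,4,2,5,3)$, with one-line form $45123$, lies in $\mathcal{A}_5^\circ(\delta_3;1432)$. Deleting $5$ gives $(1,4,2,3)$, whose one-line form $4312$ contains $321$.

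\paragraph{Parity slip.} Your recurrence has the parity reversed. In fact $a_{n+1}-a_n=n-2$ for odd $n$ and $n-1$ for even $n$, as your own values $6,9,14$ show.
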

\begin{theo}\label{th1.2}
For $n\ge 5$, $a_n^\circ(\delta_4;1432)=2^{n-1}-\lfloor \frac{3n-5}{2} \rfloor$.
\end{theo}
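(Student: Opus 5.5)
We prove Theorem~\ref{th1.2} through the decomposition (\ref{eq1.1}): we determine each $a_{n,j}^\circ(\delta_4;1432)$ and then sum over $j$. The first step does not involve the one-line condition. We describe the structure of standard cycle forms $C(\pi)=(1,j,c_3,\ldots,c_n)$ all of whose rotations avoid $1432$. Taking each entry $m$ as the starting point of a rotation, the avoidance condition says that no three entries to the cyclic right of $m$ and larger than $m$ form a decreasing sequence in cyclic order after $m$. This gives strong restrictions. For $j=2$, deleting $1$ and standardizing should yield a shorter cycle form of the same type, which sets up a recursion. For larger $j$, the entries of $\{2,\ldots,j-1\}$ must appear after $j$ in a restricted order. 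Concretely, we expect the word $c_3\cdots c_n$ to split into a few increasing runs interleaved in a controlled way, so that the forms with second entry $j$ correspond to choices of subsets. This accounts for the $2^{n-1}$ main term.

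The second step imposes $\delta_4$-avoidance in one-line notation. By the standard consequence of Dilworth's theorem, $\pi$ avoids $4321$ if and only if its one-line notation is a union of at most three increasing subsequences. For each structural family from step one we compute $\pi(i)$ explicitly: $\pi(c_i)=c_{i+1}$. We then check whether the entries split into three increasing sequences, or else exhibit an explicit $4321$. The expectation is that almost every cycle form from step one passes. The exceptions should be few, and linear in $n$: for example, forms where $\pi(1)=j$ is large and a few specific small entries are mapped downward, producing $4321$. These exceptions account for the correction $\lfloor (3n-5)/2\rfloor$, and the floor comes from a parity distinction, such as whether $n-j$ is even, in one exceptional subfamily.

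The third step is the summation. We will write $a_{n,j}^\circ(\delta_4;1432)$ as the count of $1432$-admissible forms with second entry $j$, which should be a power of $2$ such as $2^{n-j}$ or $2^{n-j-1}$ with boundary adjustments. From this we subtract the number of exceptional $4321$-containing forms for that $j$. Summing the geometric part over $2\le j\le n$ should give $2^{n-1}$ plus a constant. Summing the exceptions, about one or two per $j$, gives the linear floor term. The formula can be checked against small cases computed by hand, $n=5,6$, which give $16-5=11$ and $32-6=26$, to calibrate the boundary terms. This also explains why the statement is restricted to $n\ge5$.

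The main obstacle is the first step: obtaining a complete and correct structural description of cycle forms avoiding $1432$ in all rotations. Cyclic rotation mixes the roles of small and large entries, and an incomplete case split would corrupt the power-of-$2$ count. We will first enumerate small $n$ by hand to guess the shape of the forms, for instance "after $j$, the entries greater than $j$ and those less than $j$ each appear in nearly increasing order". We will then prove that characterization by taking $m$ to be the smallest entry in a hypothetical bad configuration and rotating so that $m$ comes first. The Dilworth check in the second step is then mechanical, but it requires care to identify the exceptional forms exactly.
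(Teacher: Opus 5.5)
You have the paper's skeleton: split by $j=c_2$, describe the rotation condition, test $4321$ by a cover with three chains, and sum. But the quantitative picture your plan rests on is false, and the actual proof is missing.

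Step~1 does not produce about $2^{n-1}$ forms. The rotation condition is equivalent to $C(\pi)$ avoiding $321$ and $2143$, and by Callan and Vella there are $2^n+1-2n-\binom{n}{3}$ such cyclic permutations. So the one-line condition removes $2^{n-1}+1-2n-\binom{n}{3}+\lfloor\frac{3n-5}{2}\rfloor$ of them, namely $2,7,24,66$ for $n=5,6,7,8$. That is asymptotically half of all forms, not a linear number of exceptions.

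You can see this already for a single $j$. For $j=3$ the admissible forms are $(1,3,A,2,B)$ with $A\sqcup B=\{4,\ldots,n\}$, each written increasingly. There are $2^{n-3}$ of them, yet only $2^{n-4}+\lfloor\frac{n-2}{2}\rfloor$ avoid $4321$. For instance, $(1,3,4,2,5,6)$ gives $\pi=354261$, which contains $5421$. Your guess that $a_{n,j}^\circ(\delta_4;1432)$ is a power of $2$ up to boundary terms also fails: for $4\le j\le n-2$ it includes a family of size $\binom{n-3}{j-3}-1$.

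So the step you call mechanical is where the whole difficulty lies, and your proposal gives no way to carry it out. The step you call the main obstacle is short: for $j\ge3$, the letters $2,\ldots,j-1$ occur in increasing order, and the letters larger than $j$ lying between $j$ and $j-1$, resp.\ after $2$, are increasing. What is missing is a criterion deciding which of these forms survive. The paper's criterion is the position of $n$ in $C(\pi)$:
\begin{itemize}
\item If $n$ is not the last letter, it immediately precedes a letter of $\{2,\ldots,j-1\}$. An explicit cover of $\mathcal{S}_\pi$ by three chains then shows that every such form avoids $4321$. This produces the terms $2^{n-4}$ (for $j=3$), $2^{n-j-1}$ and $\binom{n-3}{j-3}-1$.
\item If $n$ is last, so that $\pi(n)=1$, only linearly many forms survive. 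Either the one-line notation is forced ($345\cdots n21$, or $j(j+1)\cdots n\,23\cdots(j-1)1$ with $j=\frac{n+2}{2}$), which happens only for even $n$ and is the source of the floor. Or the form ends with $n-1,n$, and deleting the final $n$ matches it with a form of the same kind for $n-1$, giving a recursion.
\end{itemize}
Together with $a_{n,2}^\circ(\delta_4;1432)=a_{n-1}^\circ(\delta_4;1432)$, this gives $a_n^\circ(\delta_4;1432)-a_{n-1}^\circ(\delta_4;1432)=2^{n-2}-1$ for even $n$ and $2^{n-2}-2$ for odd $n$. The formula then follows by induction from $a_5^\circ(\delta_4;1432)=11$, not from a geometric sum over $j$.
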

\begin{theo}\label{th1.3}
For $n\ge 5$, $a_n^\circ(\delta_k;1432)=2^{n}+1-2n-\binom{n}{3}$,\quad $k\ge5$ (OEIS A088921).
\end{theo}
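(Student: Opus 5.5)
We prove Theorem~\ref{th1.3} in two independent parts. First, we enumerate the set $\mathcal{A}_n^\circ(\infty;1432)$ of cyclic permutations all of whose cycle rotations avoid $1432$, with no one-line condition. Second, we show that every such permutation automatically avoids $\delta_5$ in one-line notation. Then $\mathcal{A}_n^\circ(\delta_k;1432)=\mathcal{A}_n^\circ(\infty;1432)$ for all $k\ge5$, and the count is independent of $k$.

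For the enumeration, we first translate "all rotations avoid $1432$" into a condition on the cyclic word. It says that there are no values $a<b<c<d$ appearing in cyclic order $a,d,c,b$. Rotating so that $a$ is first, this is equivalent to the following: for every value $a$, the elements larger than $a$, read cyclically starting after $a$, contain no decreasing subsequence of length $3$. Taking $a=1$, the word $c_2\cdots c_n$ of $C(\pi)$ must avoid $321$, and the conditions for larger $a$ cut this down further.

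We then use the refinement \eqref{eq1.1}, conditioning on $j=c_2$.
\begin{itemize}
\item Everything smaller than $j$ (other than $1$) must appear after $j$.
\item The condition at $a=1$ forces the elements after $j$ that exceed $j$ to be increasing, apart from a controlled exception.
\item The condition at $a=j$ constrains the wrap-around part.
\end{itemize}
We will try to show that $C(\pi)$ is essentially a shuffle of two increasing runs subject to a few forbidden local configurations. This should give $a_{n,j}^\circ$ as a binomial-type sum, something like $2^{\,\cdot}$ minus a linear and a quadratic correction in $j$. Summing over $j$ should then give $2^n+1-2n-\binom n3$: the $2^n$ comes from choosing a subset (the shuffle), the $2n$ and $\binom n3$ correct for the forbidden configurations, and the $+1$ is a boundary correction. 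As a sanity check, the formula gives $13$ for $n=5$, which we would verify by hand. Alternatively, we may find it cleaner to set up a recursion $a_n=2a_{n-1}+\binom{n-1}{2}-2$ (or similar) by deleting $n$ from the cycle. Removing $n$ preserves cyclic $1432$-avoidance, so the task would be to count the admissible reinsertion positions of $n$ between two cyclically adjacent entries. We would then check that the formula satisfies this recursion, with base case $n=5$.

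For the second part, we use the structure from the first: the cycle form is a union of two increasing sequences, up to small exceptions. We then argue directly in one-line notation. If $\pi_{i_1}>\cdots>\pi_{i_5}$ with $i_1<\cdots<i_5$, the five pairs $i_t\mapsto\pi_{i_t}$ are "descending arcs" of the cycle, each with $i_t<\pi_{i_t}$ or $i_t>\pi_{i_t}$. By pigeonhole, three of them are crossing arcs of the same type. Following the cycle through these arcs, the entries $i_t$ and $\pi(i_t)$ yield four values in cyclic order $a,d,c,b$, which is an occurrence of $1432$ in some rotation, a contradiction. Dilworth's theorem in the form "no decreasing subsequence of length $5$ $\iff$ a union of $4$ increasing subsequences" may give a cleaner route: we would exhibit a partition of the one-line word into $4$ increasing sequences, coming from the two increasing runs of the cycle form split according to excedance versus deficiency.

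The main obstacle is the exact structural characterization in the first part. We must pin down exactly which shuffles of increasing runs are cyclically $1432$-free, so that the $-2n$ and $-\binom n3$ corrections come out exactly, with no overcounting when the split point $j$ is small or close to $n$. We would start by listing all $13$ permutations for $n=5$ and the $n=6$ cases to guess $a_{n,j}^\circ$, and then prove that formula by induction on $n$ via deleting $n$.
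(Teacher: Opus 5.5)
\textbf{The gap is in your first part.} You never actually show that the number of cyclic permutations all of whose rotations avoid $1432$ is $2^n+1-2n-\binom{n}{3}$. The ``shuffle of two increasing runs with a few forbidden local configurations'', the guessed shape of $a_{n,j}^\circ$, and the ``should give'' are heuristics, not an enumeration.

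Your fallback recursion is false. With $a_5^\circ=13$ it predicts $a_6^\circ=2\cdot 13+\binom52-2=34$, but the target gives $33$. In fact $f(n)=2^n+1-2n-\binom n3$ satisfies $f(n)-2f(n-1)=\binom{n-1}{3}-\binom{n-1}{2}+2n-5$. A deletion-of-$n$ argument would have to produce this cubic correction, and your sketch gives no mechanism for it.

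The paper does not derive this count at all; it quotes Callan and Vella. For a self-contained route, the missing idea is the paper's first proposition: every rotation avoids $1432$ iff $C(\pi)$ avoids $321$ and $2143$. Since the leading $1$ of $C(\pi)$ can take part in neither pattern, $\pi\mapsto c_2\cdots c_n$ (standardized) identifies the set with $\mathfrak{S}_{n-1}(321,2143)$. By reversal this is $\mathfrak{S}_{n-1}(123,3412)$, a known two-pattern class of size $2^{m+1}-\binom{m+1}{3}-2m-1$ at $m=n-1$, which is exactly the stated formula. Without either the citation or a complete enumeration, your argument only shows that the count does not depend on $k\ge5$.

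\textbf{Your second part is correct and genuinely different from the paper.} The paper's Lemma 4.1 works inside $C(\pi)$. It uses $321$-avoidance and Erd\H{o}s--Szekeres to rule out $y_1>1$, then checks $\sigma\in\{2143,2413,3142,3412\}$ by hand. Your pigeonhole needs neither the structure from part one nor Dilworth, but you must write it out:
\begin{itemize}
\item \emph{Three excedances.} Take positions $p_1<p_2<p_3$ with $q_t=\pi(p_t)$. Then $p_1<p_2<p_3<q_3<q_2<q_1$; the arcs are nested, not crossing. The cycle contains the adjacent pairs $p_tq_t$ in one of two cyclic orders. Reading from $p_1$ gives $p_1,q_1,q_2,q_3$ or $p_1,q_1,p_3,p_2$, each an occurrence of $1432$.
\item \emph{Three deficiencies.} Here $q_3<q_2<q_1<p_1<p_2<p_3$. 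Reading from $q_3$ gives $q_3,p_1,q_1,q_2$ or $q_3,p_2,p_1,q_1$.
\end{itemize}
This is shorter than the paper's case analysis. It also explains why $k=5$ is the threshold: two excedances and two deficiencies can form a $4321$, e.g.\ $C(\pi)=(1,4,2,3,5)$, $\pi=43521$.
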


\section{Proof of Theorem \ref{th1.1}}
To facilitate the presentation, we first introduce the following proposition.
\begin{prop}
All cycle forms of $\pi$ avoid $1432$ if and only if the standard cycle form $C(\pi)$ avoids $321$ and $2143$.
\end{prop}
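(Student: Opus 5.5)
The plan is to use the fact that a pattern occurring in some cyclic rotation of a word depends only on the \emph{cyclic} order of the entries involved. Occurrences of $1432$ in rotations of $C(\pi)$ then correspond to four entries whose linear order in $C(\pi)$ is one of the four rotations of $1432$. These rotations are
\[
1432,\quad 4321,\quad 3214,\quad 2143 .
\]
Three of them contain $321$ and the last one is $2143$, which explains the statement.

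For the direction ``$\Leftarrow$'', suppose some rotation $(c_i,\ldots,c_n,c_1,\ldots,c_{i-1})$ contains $1432$. Let the occurrence use values $a<b<c<d$, appearing in that rotation in the order $a,d,c,b$. Rotating a word back to $C(\pi)=(1,c_2,\ldots,c_n)$ cuts the cyclic sequence at one point. Hence these four entries appear in $C(\pi)$ in one of the linear orders
\[
a\,d\,c\,b,\qquad d\,c\,b\,a,\qquad c\,b\,a\,d,\qquad b\,a\,d\,c .
\]
The first three contain a $321$ (namely $dcb$, $dcb$ and $cba$), and the last is a $2143$. So $C(\pi)$ contains $321$ or $2143$, which is the contrapositive of the desired implication.

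For ``$\Rightarrow$'', I argue by contrapositive again. Suppose $C(\pi)$ contains $321$, say as $x>y>z$ occurring in this order. None of these entries is the initial $1$: if $1$ were used it would have to play the role of the smallest letter, which comes last, while $1$ sits in the first position. Therefore $1\,x\,y\,z$ is an occurrence of $1432$ in $C(\pi)$ itself, which is the trivial rotation.

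Now suppose $C(\pi)$ contains $2143$, realized by entries $b\,a\,d\,c$ with $a<b<c<d$. Take the rotation starting at the entry $a$. In it, $d$ and $c$ still follow $a$, and $b$, which preceded $a$, now comes after them. So this rotation contains $a\,d\,c\,b$, which is a $1432$.

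I do not expect a real obstacle. The only points needing care are the bookkeeping that a cut of the cyclic word yields exactly the four listed linear orders, and the observation that the leading $1$ cannot participate in a $321$. That observation is what lets us prepend $1$ in the forward direction.
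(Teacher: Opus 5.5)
Your proof is correct and follows essentially the same argument as the paper. The paper also lists where an occurrence $x_1x_4x_3x_2$ of $1432$ can sit relative to the leading $1$, which gives your four linear orders $adcb$, $dcba$, $cbad$, $badc$ (the paper lists $1=x_1$ as a separate fifth case, which your first order already covers). For the converse it likewise prepends $1$ to a $321$ and rotates a $2143$ to begin at its ``1''; your observation that the rotations of $1432$ are three orders containing $321$ plus $2143$ itself is a tidy way to organize the same case split.
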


\noindent \textbf{\textit{Proof}}
It suffices to show that some cyclic rotation of $C(\pi)$ contains a $1432$ pattern if and only if $C(\pi)$ contains a $321$ or $2143$ pattern.

Suppose that a cyclic rotation of $C(\pi)$ has an occurrence of $1432$.
Write it as
\[
(\ldots,x_1,\ldots,x_4,\ldots,x_3,\ldots,x_2,\ldots),\ \  x_4>x_3>x_2>x_1.
\]
Then $C(\pi)$ is one of the following five forms:
\begin{align*}
&(1,\ldots,x_1,\ldots,x_4,\ldots,x_3,\ldots,x_2,\ldots),\\
&(1,\ldots,x_4,\ldots,x_3,\ldots,x_2,\ldots,x_1,\ldots), \\
&(1,\ldots,x_3,\ldots,x_2,\ldots,x_1,\ldots,x_4,\ldots),\\
&(1,\ldots,x_2,\ldots,x_1,\ldots,x_4,\ldots,x_3,\ldots),\\
&(1=x_1,\ldots,x_4,\ldots, x_3,\ldots,x_2,\ldots).
\end{align*}
In the first, second and fifth cases, $x_4x_3x_2$ forms a $321$ pattern.
In the third case, $x_3x_2x_1$ forms a $321$ pattern.
In the fourth case, $x_2x_1x_4x_3$ forms a $2143$ pattern.
Therefore $C(\pi)$ contains a $321$ pattern or a $2143$ pattern.

Conversely, suppose $C(\pi)$ has an occurrence of $2143$.
Write
\[
C(\pi)=(1,\ldots,y_2,\ldots,y_1,\ldots, y_4,\ldots,y_3,\ldots), \quad  y_4>y_3>y_2>y_1.
\]
Then the cyclic rotation $(y_1,\ldots,y_4,\ldots,y_3,\ldots,1,\ldots \allowbreak,y_2,\ldots)$ contains $1432$ pattern $y_1y_4y_3y_2$.
Now suppose $C(\pi)$ has an occurrence of $321$.
Write $C(\pi)$ as
\[
C(\pi)=(1,\ldots,z_3,\ldots,z_2,\ldots,z_1,\ldots), \quad  z_3>z_2>z_1.
\]
Then $1z_3z_2z_1$ forms a $1432$ pattern.
$\hfill\square$

The following lemmas are essential for the proof of Theorem \ref{th1.1} and Theorem \ref{th1.2}.
\begin{lem}\label{lem2.2}
Let $\pi \in \mathfrak{S}_n$ be cyclic with $n\geq5$ and $j=c_2\geq3$.
Then $C(\pi)$ avoids both $321$ and $2143$ if and only if
\begin{enumerate}
\item[(1)] the letters $2,3,\ldots,j-1$ occur in this order in $C(\pi)$;
\item[(2)] the letters greater than $j$ that occur between $j$ and $j-1$ are in increasing order;
\item[(3)] the letters greater than $j$ that occur after $2$ are in increasing order.
\end{enumerate}
\end{lem}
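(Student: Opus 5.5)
The plan is to verify both directions directly, using only the positions of the letters in $C(\pi)=(1,j,c_3,\ldots,c_n)$. Call the letters $2,\ldots,j-1$ \emph{small} and the letters $>j$ \emph{large}. Since $1$ is first and $j$ is second, every small letter lies after $j$. When $j=3$ condition (1) is empty, and $j-1=2$.

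\textbf{Necessity.} Each condition is forced by exhibiting a forbidden pattern.
\begin{itemize}
\item[(1)] Suppose two small letters $a>b$ occur with $a$ before $b$. Then $j\,a\,b$ is a $321$.
\item[(2)] Suppose two large letters $x>y$ lie between $j$ and $j-1$ with $x$ before $y$. Then $x\,y\,(j-1)$ is a $321$.
\item[(3)] Suppose two large letters $x>y$ occur after $2$ with $x$ before $y$. Then $j\,2\,x\,y$ is a $2143$, because $2<j<y<x$.
\end{itemize}

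\textbf{Sufficiency.} Assume (1)--(3). I first extract the structural consequence I will use repeatedly. The letters $1,2,\ldots,j-1$ appear in increasing order, and all of them come after $j$ except $1$. Now take an inversion among large letters, say $x>y$ with $x$ before $y$. By (2) the pair cannot lie entirely before $j-1$, and by (3) it cannot lie entirely after $2$. So $x$ precedes $2$ and $y$ follows $j-1$. In particular nothing small, and not $j$, lies after $y$.

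Such inversions really occur, so the large letters need not be globally increasing. For example, $(1,4,6,2,3,5)$ satisfies (1)--(3) and is free of both patterns. The argument therefore has to run through these cases rather than claim the large letters are sorted.

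\textbf{Excluding $321$.} Take an occurrence $c\,b\,a$ with $c>b>a$.
\begin{itemize}
\item Since $1$ is first, $a\neq1$. At most one of the three letters lies in $\{1,\ldots,j-1\}$, because those letters are increasing.
\item If $c=j$, then $b$ and $a$ are both small, which contradicts (1).
\item If $c$ is large and $b\le j$, then $b$ is small (it follows $j$) and $a<b$ must also be small, a contradiction.
\item If $c$ and $b$ are both large, then $b$ follows $j-1$. Hence $a<b$ can be neither small nor $j$, so $a$ is large and lies after $j-1$. Then the inversion $b,a$ lies entirely after $2$, contradicting (3).
\end{itemize}

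\textbf{Excluding $2143$.} Take an occurrence $b\,a\,d\,c$ with $a<b<c<d$; the descents are $b\,a$ and $d\,c$, and $a<c$.
\begin{itemize}
\item The descent $d\,c$ has $d$ large, since $d>b>a\geq 1$ rules out $d=1$ and small letters are increasing.
\item If $c$ is large, the inversion $d,c$ forces $d$ to precede $2$. But $d$ follows $a$, and $a$ is small or large: $a\ne 1$ because $a$ follows $b$, and $a\ne j$ because $a<b$ with $b$ earlier. So I split on $a$.
\begin{itemize}
\item If $a$ is small, then $a\ge2$ precedes $d$, which precedes $2$, so $a$ precedes $2$, which is impossible.
\item If $a$ is large, then $b>a$ is also large, and the inversion $b,a$ puts $a$ after $j-1$. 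That means $a$ comes after $2$, while $d$ comes before $2$, contradicting that $d$ follows $a$.
\end{itemize}
\item If $c\le j$, then $c$ is small or $j$, and $a<b<c$ gives $b\le j-1$, so $b$ and $a$ are small. This contradicts (1), unless $b=j$ and $a$ is small; but then $c>j$, contradicting $c\le j$.
\end{itemize}

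I expect this last bookkeeping to be the main obstacle: making sure every placement of the four letters relative to $1$, $j$, the small block and the large letters is covered. I would organize it by whether each descent involves large letters, and use the inversion fact ($x$ before $2$, $y$ after $j-1$) to close each case. Finally, I would check that $j=3$ needs no separate treatment.
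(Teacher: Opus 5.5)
Your proof is correct and follows essentially the same route as the paper: necessity uses the same witnesses $j\,a\,b$, $x\,y\,(j-1)$ and $j\,2\,x\,y$, and sufficiency is a direct case check, where your preliminary fact packages what the paper uses ad hoc (for instance when it shows the ``4'' of a $2143$ lies before $2$ with an increasing prefix); that fact says an inversion $x>y$ among letters larger than $j$ must have $x$ before $2$ and $y$ after $j-1$. The only blemishes are cosmetic: your first $2143$ bullet does not rule out $d=j$, which is unnecessary anyway since $d>c$ is large whenever $c$ is, and the ``unless $b=j$'' clause in your last bullet is vacuous because $b<c\le j$.
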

\noindent \textbf{\textit{Proof}}
For the ``only if'' direction, since $C(\pi)$ avoids $321$ and $j=c_2$, the letters $2,3,\ldots,j-1$ appear in increasing order.
Then condition (2) holds for the same reason.
Now, suppose the letters greater than $j$ after $2$ are not in increasing order.
Then there exist $s,t$ after $2$ with $s>t>j$.
In this case, $C(\pi)$ contains the $2143$ pattern $j2st$, a contradiction.

Conversely, to see that $C(\pi)$ avoids $321$, it suffices to show that for any $m \in [n]$, the letters less than $m$ after $m$ appear in increasing order.
This is clear for $m\leq j$.
If $m \in [j+1,n]$ occurs between $j$ and $2$, then all the letters in $[j+1,m-1]$ after $m$ must occur after $j-1$ in increasing order.
So the letters less than $m$ after $m$ appear in increasing order.
If $m \in [j+1,n]$ lies after $2$, then all the letters less than $m$ after $m$ belong to $[3,j-1]$, which appear in increasing order.
Now suppose that $C(\pi)$ contains a $2143$ pattern.
Let $p>q$ play the roles of ``4'' and ``3'', respectively.
Since the letters $1,2,\ldots,j-1$ appear in increasing order, they can only play the role of ``1''.
Hence $p>q>j$.
Therefore, $p$ occurs between $j$ and $2$ by condition (3).
Then the letters before $p$ are increasing by condition (2), contradicting the existence of a $2143$ pattern.
$\hfill\square$

\begin{lem}\label{lem2.3}
For $n\ge5$ and $k\ge3$, we have
\[
a_{n,2}^\circ(\delta_k;1432)=a_{n-1}^\circ(\delta_k;1432).
\]
\end{lem}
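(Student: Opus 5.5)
The plan is to prove the identity by an explicit bijection between $\mathcal{A}_{n,2}^\circ(\delta_k;1432)$ and $\mathcal{A}_{n-1}^\circ(\delta_k;1432)$. Take $\pi$ with $C(\pi)=(1,2,c_3,\ldots,c_n)$. Define $\pi'\in\mathfrak{S}_{n-1}$ by deleting the letter $1$ from the cycle and subtracting $1$ from every remaining letter:
\[
C(\pi')=(1,c_3-1,\ldots,c_n-1).
\]
This is clearly a bijection between cyclic permutations of $[n]$ with $c_2=2$ and all cyclic permutations of $[n-1]$. Its inverse shifts every letter up by $1$ and reinserts $1$ in front. It remains to check that each of the two avoidance conditions is preserved in both directions.

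\emph{Cycle condition.} By the Proposition, it suffices to show that $C(\pi)$ avoids $321$ and $2143$ if and only if $C(\pi')$ does.
\begin{itemize}
\item In $C(\pi)$ the letters $1,2$ are the two smallest letters and occupy the first two positions. Hence neither can take part in an occurrence of $321$ or $2143$. Every letter of such an occurrence other than its last must have a smaller letter after it, which rules out the roles ``3'' and ``2'' of $321$ and the roles ``2'', ``4'' of $2143$. The role ``1'' of $2143$ needs a larger letter before it, and so does every role of $321$ except ``3''.
\item Thus $C(\pi)$ avoids both patterns iff the word $c_3\cdots c_n$ does.
\item By the same argument applied to the leading $1$ of $C(\pi')$, this holds iff $C(\pi')$ avoids both patterns.
\end{itemize}

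\emph{One-line condition.} Compare the one-line notations directly.
\begin{itemize}
\item On positions $2,\ldots,n$, the permutation $\pi$ takes the values $\pi(x)$ for $x\ne c_n$, and $\pi(c_n)=1$.
\item Removing $1$ from the cycle gives $c_n\mapsto 2$. So, before relabeling, $\pi'$ on positions $2,\ldots,n$ is the same sequence with the value $1$ replaced by $2$. Both values are the minimum of their sequence, so the relative order is unchanged, and so is the relabeling to $[n-1]$.
\item Hence $\pi_2\cdots\pi_n$ is order-isomorphic to the one-line form of $\pi'$.
\item Moreover $\pi_1=2$ cannot belong to an occurrence of $\delta_k$ with $k\ge3$. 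As the first entry it would have to play the role $k$, which needs at least two smaller later values, but only the value $1$ is smaller.
\end{itemize}
Therefore $\pi$ avoids $\delta_k$ iff $\pi'$ does.

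The only delicate point is this one-line bookkeeping: tracking how the deletion of $1$ from the cycle redirects $c_n\mapsto 2$, and checking that the relative order is preserved. The cycle-form part is a routine observation about leading minimal letters. Combining the two parts gives the claimed equality.
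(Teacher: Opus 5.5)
Your proof is correct and follows essentially the same route as the paper. Both use the bijection $C(\pi)=(1,2,c_3,\ldots,c_n)\mapsto C(\pi')=(1,c_3-1,\ldots,c_n-1)$, note that the leading letters $1,2$ cannot take part in a $321$ or $2143$, and observe that $\pi_1=2$ cannot start a $\delta_k$ while the rest of the one-line form standardizes to $\pi'$.

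One minor slip, which does not affect the argument: your role-by-role check never mentions the ``3'' of $2143$. It is excluded because it needs a larger letter before it, and the letters $1,2$ in the first two positions have none. Also, your claim that every non-final letter of an occurrence has a smaller letter after it is false for the ``1'' of $2143$, though you never rely on it for that role.
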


\noindent \textbf{\textit{Proof}}
We establish a bijection between the sets
\[
\mathcal{A}_{n,2}^\circ(\delta_k;1432)
=\{\pi\in\mathcal{A}_n^\circ(\delta_k;1432)\mid C(\pi)=(1,2,c_3,\ldots,c_n)\}
\]
and $\mathcal{A}_{n-1}^\circ(\delta_k;1432)$.

Let $\pi\in\mathcal{A}_{n,2}^\circ(\delta_k;1432)$ with $C(\pi)=(1,2,c_3,\ldots,c_n)$.
Define $\pi'\in\mathfrak{S}_{n-1}$ by
\[
C(\pi')=(1,c_3-1,\ldots,c_n-1).
\]
Since $1$ and $2$ are the first two letters of $C(\pi)$ and are the two smallest values, they cannot appear in an occurrence of $321$ or $2143$ in $C(\pi)$.
Hence $C(\pi)$ avoids $321$ and $2143$ if and only if the sequence $1c_3\cdots c_n$ (after standardization) avoids them. Namely, $C(\pi')$ avoids $321$ and $2143$.

For one-line condition, note that the first entry $\pi_1=2$ cannot appear in a decreasing subsequence of length $k\geq3$.
Thus the sequence obtained from the one-line notation of $\pi$ by deleting $2$ and then decreasing all remaining entries except $1$ by $1$ avoids $\delta_k$.
This sequence is precisely the one-line notation of $\pi'$.
Therefore $\pi'\in\mathcal{A}_{n-1}^\circ(\delta_k;1432)$.

Conversely, let $\pi'\in\mathcal{A}_{n-1}^\circ(\delta_k;1432)$ with
$C(\pi')=(1,c_2,\ldots,c_{n-1})$.
Define $\pi$ by
\[
C(\pi)=(1,2,c_2+1,\ldots,c_{n-1}+1).
\]
By the same reasoning as above, inserting $2$ immediately after $1$ and increasing the other letters by $1$ (while fixing $1$) in the standard cycle form cannot create an occurrence of $321$ or $2143$.
Also, inserting $2$ as the first entry and increasing the others except $1$ by $1$ in one-line notation cannot create an occurrence of $\delta_k$.
Hence $\pi\in\mathcal{A}_{n,2}^\circ(\delta_k;1432)$.

The two constructions are inverse to each other, so they give a bijection and the equality follows.
$\hfill\square$

For any $\pi \in \mathfrak{S}_n$, define a binary relation $\le_{\pi}$ on $[n]$ by letting $i \le_{\pi} j$ if and only if $i \le j$ and $\pi_i \le \pi_j$.
Denote $\mathcal{S}_{\pi}$ by the poset $([n],\le_{\pi})$.
It is not hard to see that $\mathcal{S}_{\pi}:=([n],\le_{\pi})$ is a partially ordered set.
Moreover, an antichain in $\mathcal{S}_{\pi}$ of size $k$ is exactly the set of indices of a decreasing subsequence of length $k$ in the one-line notation of $\pi$.
\begin{lem}\label{lem2.4}
Let $\pi \in \mathfrak{S}_n$ and $k\geq3$. If $\mathcal{S}_{\pi}$ is covered by $k-1$ chains, then $\pi$ avoids $\delta_k$ in one-line notation.
\end{lem}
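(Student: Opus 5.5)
The statement follows from the easy direction of Dilworth's theorem: a chain and an antichain meet in at most one element, so a cover by $k-1$ chains rules out any antichain of size $k$. I would first confirm that $\le_{\pi}$ is a partial order, since the paper only asserts this. Reflexivity is immediate. Antisymmetry holds because $i\le j$ and $j\le i$ force $i=j$. Transitivity holds because both coordinate inequalities are transitive.

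Next I would make the correspondence between antichains and decreasing subsequences precise. Take distinct indices $i<j$. They are comparable in $\mathcal{S}_{\pi}$ exactly when $\pi_i<\pi_j$, because $\pi_i\neq\pi_j$ and $j\le_{\pi} i$ is impossible when $i<j$. So they are incomparable exactly when $\pi_i>\pi_j$. It follows that a set of indices $i_1<\cdots<i_k$ is an antichain if and only if $\pi_{i_1}>\cdots>\pi_{i_k}$, i.e. if and only if $\pi_{i_1}\cdots\pi_{i_k}$ is an occurrence of $\delta_k$.

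For the main step, suppose $\mathcal{S}_{\pi}$ is covered by chains $C_1,\ldots,C_{k-1}$, and suppose for contradiction that $\pi$ contains $\delta_k$ at indices $A=\{i_1,\ldots,i_k\}$. By the previous step, $A$ is an antichain of size $k$. Each element of $A$ lies in some $C_r$, and there are only $k-1$ chains. By the pigeonhole principle, two distinct elements of $A$ lie in the same chain. Those two elements are then comparable, contradicting that $A$ is an antichain. Hence $\pi$ avoids $\delta_k$.

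The argument is elementary, and there is no real obstacle. The only point needing care is the antichain characterization, which uses that the entries of $\pi$ are distinct. The full Dilworth theorem, that the minimum number of covering chains equals the maximum antichain size, is not needed here. It would only be needed for the converse direction.
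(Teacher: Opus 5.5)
Your proof is correct and is essentially the paper's argument. The paper simply cites Dilworth's theorem, while you prove directly, by pigeonhole (a chain meets an antichain in at most one element), the easy inequality ``maximum antichain size $\le$ minimum number of covering chains,'' which is the only part of Dilworth actually needed; you also check the two facts the paper merely asserts, namely that $\le_{\pi}$ is a partial order and that antichains are exactly the index sets of decreasing subsequences.
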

\noindent \textbf{\textit{Proof}}
This is a special case of Dilworth's theorem \cite{Dilworth}, which says that, in any finite poset, the maximum size of an antichain equals the minimum number of chains needed to cover all elements.
$\hfill\square$

Now we consider cyclic permutations that avoid $\delta_3$ in one-line notation and avoid both $321$ and $2143$ in standard cycle form.
We will enumerate permutations $\pi \in \mathcal{A}_n^\circ(\delta_3;1432)$ with $C(\pi)=(1,j,c_3,\ldots,c_n)$ according to the value of $j=c_2$.
\begin{prop}\label{prop2.5}
For $n\geq5$, $a_{n,2}^\circ(\delta_3;1432)=a_{n-1}^\circ(\delta_3;1432).$
\end{prop}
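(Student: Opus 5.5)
This statement is the case $k=3$ of Lemma \ref{lem2.3}, which holds for every $n\ge5$ and $k\ge3$. So the plan is simply to specialize that lemma to $k=3$, and no new argument is needed. For the reader's convenience we recall how the bijection of Lemma \ref{lem2.3} works in this case, and check that nothing in it depends on $k$ beyond $k\ge 3$.

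\emph{The bijection.} Take $\pi$ with $C(\pi)=(1,2,c_3,\ldots,c_n)$. Delete $2$ from the cycle form and standardize, obtaining $C(\pi')=(1,c_3-1,\ldots,c_n-1)$.

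\emph{Cycle-form conditions.} By the Proposition characterizing avoidance of $1432$ in all cycle forms, we only need to track $321$ and $2143$ in the standard cycle form. The letters $1$ and $2$ are the two smallest values and occupy the first two positions. Neither $321$ nor $2143$ can use them:
\begin{itemize}
\item in $321$, every letter except the last has a smaller letter after it, and $1,2$ are not the last letters of any subsequence of length at least two, except that $2$ could be last only if preceded by $1$, which gives no descent;
\item in $2143$, the letters playing ``2'' and ``1'' form a descent, but $1,2$ in that order are increasing.
\end{itemize}
So the standard cycle forms of $\pi$ and $\pi'$ avoid $321$ and $2143$ simultaneously.

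\emph{One-line condition.} Since $\pi(1)=2$ and $\pi(\pi^{-1}(2))$ is rerouted, we have $\pi_1=2$. The entry $2$ in position $1$ could only begin a decreasing subsequence, and such a subsequence would be $2,1$, of length $2<3$. So $2$ lies in no occurrence of $\delta_3$. Deleting it and decrementing the entries $\ge3$ yields exactly the one-line notation of $\pi'$, because the cycle $1\to c_3$ in $\pi'$ replaces $1\to2\to c_3$ in $\pi$. Hence $\pi$ avoids $\delta_3$ if and only if $\pi'$ does.

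\emph{Inverse map.} Inserting $2$ after $1$ in the cycle form and shifting the other letters up reverses the construction, so we have a bijection $\mathcal{A}_{n,2}^\circ(\delta_3;1432)\to\mathcal{A}_{n-1}^\circ(\delta_3;1432)$.

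The only step needing any care is the identification of the one-line notation of $\pi'$ with the deleted-and-shifted one-line notation of $\pi$. This identification was already verified in Lemma \ref{lem2.3}, so the proposition follows immediately by setting $k=3$ there.
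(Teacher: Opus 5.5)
Your proposal is correct and matches the paper, which proves the proposition by specializing Lemma~\ref{lem2.3} to $k=3$; your recap of that bijection is also accurate. The clause about $\pi(\pi^{-1}(2))$ being ``rerouted'' is superfluous, since $\pi_1=2$ follows directly from $C(\pi)=(1,2,c_3,\ldots,c_n)$.
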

\noindent \textbf{\textit{Proof}}
This follows immediately from Lemma \ref{lem2.3}.
$\hfill\square$

\begin{lem}\label{lem2.6}
For $n\geq5$, $a_{n,3}^\circ(\delta_3;1432)=\lfloor \frac{n-1}{2} \rfloor$.
\end{lem}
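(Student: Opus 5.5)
By Lemma \ref{lem2.2} with $j=3$, condition (1) is vacuous. Conditions (2) and (3) say that the letters between $3$ and $2$ form an increasing word, and so do the letters after $2$. So the elements of $\mathcal{A}_{n,3}^\circ(\delta_3;1432)$ correspond to the cyclic permutations with
\[
C(\pi)=(1,3,a_1,\ldots,a_r,2,b_1,\ldots,b_s),
\]
where $a_1<\cdots<a_r$, $b_1<\cdots<b_s$ and $A\cup B=\{4,\ldots,n\}$ is a disjoint union. That is $2^{n-3}$ candidates. It remains to decide which choices of $A$ make the one-line notation avoid $321$.

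Reading off the one-line notation gives $\pi_1=3$ and $\pi_3=a_1$ (or $2$ if $A=\emptyset$). It also gives $\pi_{a_i}=a_{i+1}$ and $\pi_{a_r}=2$, then $\pi_2=b_1$ (or $1$ if $B=\emptyset$), $\pi_{b_i}=b_{i+1}$ and $\pi_{b_s}=1$. There are no fixed points. Every position except $a_r$ and $b_s$ is an excedance, and these two positions carry the values $2$ and $1$. I will use the standard characterization that a permutation avoids $321$ if and only if its excedance values increase from left to right and its non-excedance values increase from left to right. (If one prefers to stay within the paper, Lemma \ref{lem2.4} gives the same conclusion: the excedance positions and the non-excedance positions give a cover of $\mathcal{S}_\pi$ by two chains. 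The converse direction I would check directly by exhibiting an occurrence of $321$.)

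The non-excedance condition forces $b_s<a_r$, so $n\in A$. Indeed, if $n\in B$ then $3,2,1$ occur at positions $1<a_r<n$ (or $1<3<n$ when $A=\emptyset$), which is an occurrence of $321$. The excedance condition says the following. First, the values $3,\pi_2,\pi_3$ at positions $1,2,3$ increase, which forces $b_1<a_1$ when $B\neq\emptyset$, that is, $4\in B$. Second, the map $x\mapsto\mathrm{succ}(x)$ on the non-terminal elements of $A\cup B$ must be increasing, and it must dominate the earlier values.

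I expect the main obstacle to be turning this monotonicity condition into an explicit description. My claim is that it forces
\[
B=\{4,6,\ldots,2m+2\},\qquad A=\{5,7,\ldots,2m+1\}\cup\{2m+3,\ldots,n\}
\]
for some $m\ge0$ with $2m+2<n$, with $m=0$ meaning $B=\emptyset$ and $A=\{4,\ldots,n\}$. The idea is that two consecutive elements $x<x+1$ in the same block, with another block still to come, create a descent among the excedance values. For example, $B\supseteq\{4,5\}$ gives $\pi_3=a_1>5=\pi_4$. So $A$ and $B$ must alternate up to $b_s$, after which $A$ continues consecutively up to $n$.

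I would prove this by induction along $4,5,\ldots$, tracking which block is "active". Then I would check directly that each such pair $(A,B)$ does satisfy both monotonicity conditions; the case $n=5$ gives the two permutations with $m=0,1$. Counting the admissible $m$ under the constraint $n\in A$ then gives $\lfloor\frac{n-1}{2}\rfloor$. I would double-check this endpoint count carefully for both parities of $n$.
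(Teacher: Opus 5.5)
Your proof is correct, but it runs in the opposite direction from the paper's.

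\textbf{The paper's route.} After discarding the forms with $n$ after $2$ (your step $n\in A$), the paper passes to one-line notation. Since $\pi_n=2$, all entries $\ge 3$ must appear in increasing order. So $\pi=3\,4\cdots(p+1)\,1\,(p+2)\cdots n\,2$ is determined by the position $p=c_n$ of the letter $1$. Cyclicity then rules out odd $p$, since odd $p$ would close the short cycle $(1,3,5,\ldots,p)$. Counting the even $p\in[2,n-1]$ gives $\lfloor\frac{n-1}{2}\rfloor$.

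\textbf{Your route.} You stay in cycle-form coordinates $(A,B)$. Cyclicity is therefore built in and Lemma~\ref{lem2.2} is used as an equivalence. The price is that you must derive the alternation of $A$ and $B$ from the excedance criterion. The induction you only sketch does go through, as follows.
\begin{itemize}
\item Suppose $x,x+1$ lie in the same block; in the case of $A$, assume also that some element of $B$ exceeds $x+1$.
\item Let $y<x$ be the largest element of the other block, counting $3$ as the head of $A$ and $2$ as the head of $B$.
\item The successor of $y$ exists (by $n\in A$, respectively by the hypothesis on $B$), and it is at least $x+2$.
\item Hence $\pi_y>\pi_x=x+1$ with $y<x$, and $\pi_n=2$ completes a $321$.
\end{itemize}

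Your setup already contains the paper's shortcut. The excedance values are exactly $3,4,\ldots,n$, and they sit at the positions $[1,n-1]\setminus\{b_s\}$. Requiring them to increase therefore determines $\pi$ from $b_s$ alone, and the admissible $b_s$ are then the even ones by the same short-cycle argument.

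The paper's route is shorter. Yours produces the admissible cycle forms directly, with cyclicity coming for free.
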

\noindent \textbf{\textit{Proof}}
Let $\pi \in \mathcal{A}_{n,3}^\circ(\delta_3;1432)$.
By Lemma \ref{lem2.2}, we have $C(\pi)=(1,3,c_3,\ldots,n,2,\ldots,c_n)$ or $(1,3,c_3,\ldots,2,\ldots,n)$.
However, the one-line notation of the latter contains a $321$ pattern $\pi_12\pi_n=321$.

Now consider the one-line notation of $\pi$ when $C(\pi)=(1,3,c_3,\ldots,n,2,\ldots,c_n)$.
Since $\pi_1=3$ and $\pi_n=2$, all entries greater than $3$ must appear in increasing order.
If $c_n$ is odd, then the cycle decomposition of $\pi$ contains the cycle $(1,3,5,\ldots,c_n-2,c_n)$ which contradicts the fact that $\pi$ is cyclic.
Thus $c_n$ is even and $C(\pi)=(1,3,4,5,6,\ldots,n,2)$ or $(1,3,5,\ldots,n,2,4), (1,3,5,7,\ldots,n,2\allowbreak,4,6\allowbreak),\cdots$.
So
\[
a_{n,3}^\circ(\delta_3;1432)=\lfloor \frac{n-1}{2} \rfloor.
\]
$\hfill\square$

\begin{lem}\label{lem2.7}
\[
\sum_{j=4}^n a_{n,j}^\circ(\delta_3;1432)=\lfloor \frac{n-3}{2} \rfloor, \quad  \ n\geq 5.
\]
\end{lem}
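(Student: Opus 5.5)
The plan is to split the sum according to the value of $j=c_2$. I will show that $j=4$ contributes exactly $\lfloor\frac{n-3}{2}\rfloor$ and that every $j\ge5$ contributes $0$. The tools are Lemma \ref{lem2.2} on the shape of $C(\pi)$, and the one-line condition that $\pi$ avoids $\delta_3$. Since $\pi_1=j$, every entry smaller than $j$ in positions $2,\ldots,n$ must appear in increasing order, and every entry larger than $j$ must also be increasing. Otherwise $j$, or a suitable pair of larger entries, would start a $321$. A small case confirms that $j=4$ is not empty: for $n=5$, $C(\pi)=(1,4,2,5,3)$ gives $\pi=45123$, which works.

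\textbf{Step 1: $j\ge5$ is impossible.} By Lemma \ref{lem2.2}(1), the letters $2,3,\ldots,j-1$ occur in this order in $C(\pi)$. Their images are their successors in the cycle, and each successor is either the next small letter or a letter larger than $j$ from the blocks described in Lemma \ref{lem2.2}(2),(3). I will locate $\pi(2),\pi(3),\ldots,\pi(j-1)$ and $\pi^{-1}(1)=c_n$, and then use the requirement that the values below $j$ sit at increasing positions. The expected outcome is a $321$ in one-line notation whenever $j\ge5$. The most likely source is $j$ together with two small values that are forced into the wrong order, namely the image of the last small letter before the tail and the value $1$ at position $c_n$. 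If that fails, I will look for a contradiction with the single-cycle condition. This is a parity-type obstruction like the one in Lemma \ref{lem2.6}, where a forced increasing run closes into a proper subcycle.

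\textbf{Step 2: the case $j=4$.} By Lemma \ref{lem2.2}, $C(\pi)=(1,4,A,3\text{-block})$ with $2$ before $3$. Here the letters greater than $4$ between $4$ and $3$ increase, and those after $2$ increase. Following Lemma \ref{lem2.6}, I will split on where $n$ sits relative to $2$ and $3$. I will discard the configurations that create a $321$ in one-line notation, such as $\pi_1=4$ followed by two smaller values in the wrong order, as happens for $(1,4,2,3,5)\mapsto 43521$. I expect the surviving configurations to be rigid. The large letters should be forced into an interleaving of the form $4,\ldots,2,\ldots,3,\ldots$ in which the increasing one-line subsequences of values greater than $4$ determine $C(\pi)$ through a single free parameter. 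The parameter plays the role of the step in Lemma \ref{lem2.6}, where consecutive large letters in the cycle differ by a fixed amount.

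\textbf{Step 3: counting and the main obstacle.} The single-cycle condition eliminates one parity of the free parameter, exactly as the odd $c_n$ case did in Lemma \ref{lem2.6}. That should leave $\lfloor\frac{n-3}{2}\rfloor$ permutations, and I will check the count against $n=5,6$ by hand. The main obstacle is Step 2: extracting an exact structural description from the combined one-line and cycle-form constraints. If the direct case analysis becomes unwieldy, I will instead apply Lemma \ref{lem2.4}. To verify that a candidate avoids $\delta_3$, it suffices to exhibit a cover of $\mathcal{S}_\pi$ by two chains, and these should be the positions whose values lie below $j$ and those whose values lie above $j$.
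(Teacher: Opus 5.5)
Your split of the sum by $j$ is wrong, so neither Step~1 nor Step~2 can be carried out. It is false that every $j\ge5$ contributes $0$, and it is false that $j=4$ contributes $\lfloor\frac{n-3}{2}\rfloor$.

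For $n=7$ and $j=5$, the cycle $C(\pi)=(1,5,2,6,3,7,4)$ satisfies conditions (1)--(3) of Lemma~\ref{lem2.2}. It gives $\pi=5671234$, a union of two increasing runs, so $a_{7,5}^\circ(\delta_3;1432)\ge1$.

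Conversely, $j=4$ contributes exactly one permutation for every $n\ge5$. Write $C(\pi)=(1,4,A,2,B,3,C)$, where $A,B,C$ are blocks of letters larger than $4$ with $AB$ and $BC$ increasing. Because $\pi_1=4$, the values $1,2,3$ must occupy increasing positions $c_n<\pi^{-1}(2)<\pi^{-1}(3)$. This forces $C=\emptyset$ and $B\neq\emptyset$. If $A=(5,\ldots,m)$ is nonempty, then $\pi_2\pi_4\pi_m=(m+1)\,5\,2$ is a $321$. So only $C(\pi)=(1,4,2,5,6,\ldots,n,3)$ survives, and there is no free parameter and no parity obstruction of the Lemma~\ref{lem2.6} type. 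Your planned checks at $n=5,6$ would not reveal this, since for $n\le6$ only $j=4$ is admissible. The error first shows at $n=7$, where the value $2$ splits as $1+1$ over $j=4,5$.

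The missing idea is that for \emph{each} $j\ge4$ the constraints force a unique cycle form
\[
C(\pi)=(1,j,2,j+1,3,j+2,\ldots,j-2,2j-3,2j-2,\ldots,n,j-1).
\]
The ingredients are as follows.
\begin{itemize}
\item For $2\le i\le j-2$, the letters $i,i+1$ are not adjacent in $C(\pi)$. Otherwise $\pi_1\pi_i\pi_{c_n}=j\,(i+1)\,1$ is a $321$, since $c_n\ge j-1>i$.
\item $C(\pi)$ ends with $n,j-1$. If $n$ occurred after $j-1$, then $c_n=n$ and $j,2,1$ would be a $321$.
\item $c_3=2$. Otherwise $c_3=j+1$, and $\pi_{j-2}\,(j+1)\,(j-1)$ at positions $j-2<j<n$ is a $321$.
\item For $2\le i\le j-3$, exactly one large letter lies between $i$ and $i+1$. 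Two of them, $j+r<j+r+1$, would give the $321$ $\pi_{j-2}\,(j+r+1)\,(j-1)$.
\end{itemize}
This form needs $n-j\ge j-3$, i.e.\ $j\le\lfloor\frac{n+3}{2}\rfloor$.

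Your two-chain cover (positions of values below $j$ versus the rest), together with Lemma~\ref{lem2.4}, does verify that this form avoids $\delta_3$, so that part of your plan survives. The count $\lfloor\frac{n-3}{2}\rfloor$ is therefore the number of admissible values of $j$, with one permutation each. It is not a parity count within $j=4$.
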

\noindent \textbf{\textit{Proof}}
Let $\pi \in \mathcal{A}_{n,j}^\circ(\delta_3;1432)$ with $4\le j \le n$, then by Lemma \ref{lem2.2},
\[
C(\pi)=(1,j,c_3,\ldots,2,\ldots,3,\ldots,j-2,\ldots,j-1,\ldots,c_n).
\]
Moreover, the letters between $j$ and $j-1$ that are greater than $j$ must appear in increasing order; and the same holds for the letters after $2$ that are greater than $j$.

We show that the only possible case is
\[
C(\pi)=(1,j,2,j+1,3,j+2,\ldots,j-2,2j-3,2j-2,\ldots,n,j-1).
\]

First, we claim that for any $2 \le i \le j-2$, $i$ and $i+1$ are not adjacent in $C(\pi)$.
Suppose not.
Then there exists $i_0 \in [2,j-2]$ such that $\pi_{i_0}=i_0+1$.
Note that $c_n\geq j-1>i_0$, so we have $\pi_1\pi_{i_0}\pi_{c_n}=j(i_0+1)1$ as a $\delta_3$ pattern in the one-line notation of $\pi$.
A contradiction.

Subsequently, if $n$ occurs after $j-1$ in $C(\pi)$, then $c_n=n$ which leads to a $\delta_3$ pattern $\pi_12\pi_n=j21$ in one-line notation.
So $n$ occurs before $j-1$ and thus $C(\pi)=(1,j,c_3,\ldots,2,\allowbreak\ldots,j-2,\ldots,n,j-1)$.

If $c_3 \neq 2$, then $c_3=j+1$ since all the letters greater than $j$ appear in increasing order.
In this case, $\pi_{j-2}>j+1$ induces a $\delta_3$ pattern $\pi_{j-2}\pi_{j}\pi_{n}=\pi_{j-2}(j+1)(j-1)$ in one-line notation.
Thus $c_3=2$.

Finally, suppose that there are more than one letter between $i$ and $i+1$ in $C(\pi)$ for some $i \in [2,j-3]$.
Let $C(\pi)=(1,j,2,c_4,\ldots,i,j+r,j+r+1,\ldots,i+1,\ldots,j-2,\ldots,n,j-1)$ with $r \ge 1$, then $\pi_{j-2}\pi_{j+r}\pi_n=\pi_{j-2}(j+r+1)(j-1)$ forms a $\delta_3$ pattern in one-line notation.
Therefore, $C(\pi)=(1,j,2,j+1,3,j+2,\ldots,j-2,2j-3,2j-2,\ldots,n,j-1)$ which implies that $2j-3 \leq n$.

It is straightforward to check that chains
\[\{1<_{\pi}2<_{\pi}3<_{\pi}\cdots<_{\pi}j-2<_{\pi}2j-3<_{\pi}2j-2<_{\pi}\cdots<_{\pi}n-1\}\]
and
\[\{j-1<_{\pi}j<_{\pi}\cdots<_{\pi}2j-4<_{\pi}n\}\]
cover the poset $\mathcal{S}_\pi$.
So $\pi$ avoids $\delta_3$ in one-line notation by Lemma \ref{lem2.4}.

Therefore,
\[
\sum_{j=4}^n a_{n,j}^\circ(\delta_3;1432)=\sum_{j=4}^{\lfloor \frac{n+3}{2} \rfloor} 1=\lfloor \frac{n-3}{2} \rfloor.
\]
$\hfill\square$

\textbf{\textit{Proof of Theorem \ref{th1.1}}}
Combining equation \ref{eq1.1}, Proposition \ref{prop2.5}, Lemma \ref{lem2.6} and Lemma \ref{lem2.7}, we obtain that for $n\geq 5$
\[
a_n^\circ(\delta_3;1432)=a_{n-1}^\circ(\delta_3;1432)+\lfloor \frac{n-1}{2} \rfloor+\lfloor \frac{n-3}{2} \rfloor=\lfloor \frac{n^2+7}{2} \rfloor-2n.
\]
$\hfill\square$

\section{Proof of Theorem \ref{th1.2}}
In this section, we consider cyclic permutations that avoid $\delta_4=4321$ in one-line notation and avoid both $321$ and $2143$ in standard cycle form.
Similar to the proof of Theorem \ref{th1.1}, we enumerate permutations $\pi \in \mathcal{A}_n^\circ(\delta_4;1432)$ with $C(\pi)=(1,j,c_3,\ldots,c_n)$ according to the value of $j=c_2$.
\begin{lem}\label{lem3.1}
Let $n\ge6$ and $3\le j\le n-2$.
Then there is a bijection between
\[
\{\pi \in \mathcal{A}_n^\circ(\delta_4;1432) \mid C(\pi)=(1,j,\ldots,2,\ldots,3,\ldots,j-1,\ldots,n-1,n)\}
\]
and
\[
\{\pi' \in \mathcal{A}_{n-1}^\circ(\delta_4;1432) \mid C(\pi')=(1,j,\ldots,2,\ldots,3,\ldots,j-1,\ldots,n-1)\}
\]
\end{lem}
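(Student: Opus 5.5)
The natural map deletes the final letter $n$ from the standard cycle form. Given $\pi$ with $C(\pi)=(1,j,\ldots,2,\ldots,3,\ldots,j-1,\ldots,n-1,n)$, I define $\pi'$ by $C(\pi')=(1,j,\ldots,2,\ldots,j-1,\ldots,n-1)$, i.e.\ the same sequence with $n$ removed. Since $j\le n-2$, no other letter is renumbered. For the inverse, I append $n$ at the end of $C(\pi')$; this requires that $c_{n-1}=n-1$ in $C(\pi')$, which is part of the definition of the target set. So the two maps are mutually inverse on the level of cycle forms. What remains is to check that both sets are preserved, under two conditions: the cycle-form condition, via Proposition 2.1 and Lemma \ref{lem2.2}, and the one-line $\delta_4$ condition.

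\emph{Cycle-form condition.} I use Lemma \ref{lem2.2} directly. In both forms, condition (1) concerns only letters below $j$, so it is unaffected. In $C(\pi)$ the letter $n$ lies after $j-1$, so it is not between $j$ and $j-1$. Hence condition (2) is the same for $\pi$ and $\pi'$. For condition (3), $n$ is the last letter and the maximum, so appending it after $n-1$ preserves the increasing order of the letters greater than $j$ after $2$. Conversely, removing it preserves that order as well. Thus $C(\pi)$ avoids $321$ and $2143$ if and only if $C(\pi')$ does.

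\emph{One-line condition.} In $\pi$ we have $\pi_{n-1}=n$ and $\pi_n=1$. In $\pi'$ we have $\pi'_{n-1}=1$, and all other entries agree: $\pi'_i=\pi_i$ for $i\le n-2$. So the one-line notation of $\pi$ is obtained from that of $\pi'$ by replacing its last entry $1$ with $n$ and then appending $1$.

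Suppose $\pi$ contains $4321$. The entry $n$ at position $n-1$ can only serve as the first letter of such a pattern, and then at most one letter follows it, so it cannot occur. Therefore any occurrence uses only positions $\le n-2$ together with possibly the final $1$. Each such occurrence corresponds to an occurrence in $\pi'$ with the final $1$ at position $n-1$. Conversely, an occurrence of $4321$ in $\pi'$ uses positions $\le n-2$ and possibly $\pi'_{n-1}=1$. It transfers to $\pi$ by using $\pi_n=1$ instead. Hence $\pi$ avoids $\delta_4$ if and only if $\pi'$ does.

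\emph{Cyclicity.} This is automatic, because we are working with cycle forms, and removing or inserting $n$ keeps a single cycle.

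The main subtlety is the bookkeeping: I must confirm that the hypothesis $j\le n-2$ guarantees that $n-1$ and $n$ are both greater than $j$, so the structure described in Lemma \ref{lem2.2} is untouched. I must also check that the image really has $n-1$ as its last letter, so that it lands in the stated target set. Both points follow immediately from the definitions.
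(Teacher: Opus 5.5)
Your proof is correct and follows essentially the same route as the paper: delete (respectively append) $n$ at the end of the standard cycle form, and note that in one-line notation $n$ sits at position $n-1$ with only the entry $1$ after it, so it cannot lie in any $4321$. The one difference is in the cycle-form condition: you check it through the characterization in Lemma~\ref{lem2.2}, while the paper simply observes that a final, maximal letter cannot play any role in a $321$ or $2143$ occurrence; both arguments are valid.
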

\noindent \textbf{\textit{Proof}}
Let $\pi\in \mathcal{A}_n^\circ(\delta_4;1432)$ with $C(\pi)=(1,j,\ldots,2,\ldots,3,\ldots,j-1,\ldots,n-1,n)$.
Define $\pi'$ by deleting the letter $n$ from $C(\pi)$, thus
$C(\pi')=(1,j,\ldots,2,\ldots,3,\ldots,j-1,\ldots,n-1)$.

Since $n$ is the last letter of $C(\pi)$, $n$ cannot participate in an occurrence of $321$ or $2143$ pattern.
Hence $C(\pi')$ avoids $321$ and $2143$.
Moreover, in one-line notation $n$ occurs at position $n-1$ in $\pi$, so $n$ cannot be part of a decreasing
subsequence of length $4$; therefore $\pi'$ avoids $\delta_4$.
Thus $\pi'\in \mathcal{A}_{n-1}^\circ(\delta_4;1432)$ and $C(\pi')$ has the required form.

Conversely, let $\pi'\in \mathcal{A}_{n-1}^\circ(\delta_4;1432)$ with $C(\pi')=(1,j,\ldots,2,\ldots,3,\ldots,j-1,\ldots,n-1)$.
Construct $\pi$ by inserting the letter $n$ at the end of $C(\pi')$, i.e., $C(\pi)=(1,j,\ldots,2,\ldots,3,\ldots,j-1,\ldots,n-1,n)$.
By the same reasoning as above, inserting $n$ at the end cannot create an occurrence of $321$ or $2143$ in the standard cycle form, and inserting
$n$ at position $n-1$ in one-line notation cannot create an occurrence of $\delta_4$.
Hence $\pi\in \mathcal{A}_n^\circ(\delta_4;1432)$.

The two constructions are inverse to each other, so they define the desired bijection.
\hfill$\square$

\begin{prop}\label{prop3.2}
For $n\geq 6$, $a_{n,2}^\circ(\delta_4;1432)= a_{n-1}^\circ(\delta_4;1432)$.
\end{prop}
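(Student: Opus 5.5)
This is an immediate consequence of Lemma \ref{lem2.3}, specialized to $k=4$. That lemma asserts $a_{n,2}^\circ(\delta_k;1432)=a_{n-1}^\circ(\delta_k;1432)$ for all $n\ge5$ and $k\ge3$. Taking $k=4$ and restricting to $n\ge6$, which lies inside the range $n\ge5$, gives the claim directly. The plan is therefore just to cite Lemma \ref{lem2.3}, exactly as Proposition \ref{prop2.5} did for $k=3$.

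For completeness, the bijection underlying that lemma works as follows. Send $\pi$ with $C(\pi)=(1,2,c_3,\ldots,c_n)$ to $\pi'$ with $C(\pi')=(1,c_3-1,\ldots,c_n-1)$.

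The cycle-form conditions are preserved in both directions. The letters $1,2$ are the two smallest values and sit in the first two positions of $C(\pi)$, so they cannot take part in an occurrence of $321$ or $2143$. By the first proposition of Section 2, avoiding these two patterns in $C(\pi)$ is equivalent to all cyclic rotations avoiding $1432$.

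The one-line condition is also preserved. We have $\pi_1=2$, and deleting this entry (then lowering every entry except $1$ by one) produces exactly the one-line notation of $\pi'$. An entry $2$ in the first position can only start a decreasing subsequence of length at most $2$, so it never lies in an occurrence of $\delta_4$. Conversely, inserting it back cannot create such an occurrence.

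No step is a real obstacle. The only point needing care is to confirm that the hypothesis $n\ge6$ is compatible with Lemma \ref{lem2.3}, which requires only $n\ge5$. The stronger assumption is presumably there so the statement matches the range of Lemma \ref{lem3.1} used later in this section.
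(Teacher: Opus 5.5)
Your proposal is correct and matches the paper's proof: Proposition~\ref{prop3.2} is obtained by citing Lemma~\ref{lem2.3} with $k=4$, since $n\ge 6$ lies inside that lemma's range $n\ge 5$. Your recap of the underlying bijection, deleting the leading $2$ from $C(\pi)$ and from the one-line notation and lowering all other letters except $1$ by one, is also accurate.
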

\noindent \textbf{\textit{Proof}}
This follows immediately from Lemma \ref{lem2.3}.
$\hfill\square$

\begin{lem}\label{lem3.3}
For $n\geq 6$, $a_{n,3}^\circ(\delta_4;1432)=2^{n-4}+\lfloor \frac{n-2}{2} \rfloor$.
\end{lem}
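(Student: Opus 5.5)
By Lemma \ref{lem2.2} with $j=3$, condition (1) is vacuous. Conditions (2) and (3) say that the letters of $\{4,\ldots,n\}$ split into two increasing words. Hence every $\pi\in\mathcal{A}_{n,3}^\circ(\delta_4;1432)$ has
\[
C(\pi)=(1,3,A,2,B),
\]
where $A$ and $B$ are increasing words whose union is $\{4,\ldots,n\}$. Conversely, every such cycle form avoids $321$ and $2143$. So the proof reduces to two tasks: decide which choices of $A$ actually give a single $n$-cycle (this is automatic, since we start from a cycle form), and decide which of these one-line notations avoid $4321$. I plan to split according to where $n$ sits: either $n$ is the last letter of $A$, so $C(\pi)=(1,3,A',n,2,B)$ and $\pi_n=2$; or $n$ is the last letter of $B$, so $C(\pi)$ ends with $n$. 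The two cases should contribute $2^{n-4}$ and $\lfloor\frac{n-2}{2}\rfloor$ in some order, and I will aim to show exactly that.

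\emph{Case $C(\pi)=(1,3,A',n,2,B)$.} Here $\pi_1=3$ and $\pi_n=2$. I will read the one-line notation directly. The letters $\ge 4$ sit in positions determined by the two interleaved increasing runs. A $4321$ must use at most the two small values $2$ and $1$ at its tail. Moreover, $3$ at position $1$ cannot start a $4321$, because only $2$ and $1$ are smaller than it. So I will characterize when the large entries contain a $21$ pattern that is followed (in position) by $2$ and then by the position $c_n$ of $1$. I then hope to show that the constraint is very rigid, as in Lemma \ref{lem2.6}. Concretely, the surviving forms should be $(1,3,4,\ldots,n,2)$ and the ``step-two'' forms $(1,3,5,\ldots,n,2,4,\ldots)$, where the last entry must have the right parity for $1$ to be reached. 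I then expect a parity count giving $\lfloor\frac{n-2}{2}\rfloor$. For sufficiency I will exhibit three chains covering $\mathcal{S}_\pi$ and invoke Lemma \ref{lem2.4}. For necessity I will exhibit an explicit $4321$, for example $\pi_a\pi_b\,2\,1$-type occurrences.

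\emph{Case $C(\pi)$ ends with $n$.} Here $\pi_{c_{n-1}}=n$ and $\pi_n=1$, so $n$ and $1$ are harmless in the middle and at the end of the one-line notation. I plan a doubling recursion: delete $n$, or move it, in the spirit of Lemma \ref{lem3.1}. Lemma \ref{lem3.1} applies directly only when $n-1$ immediately precedes $n$; the remaining subcase, with $n-1$ in $A$, needs a separate small bijection. Alternatively, I will count directly: show that every choice of $A\subseteq\{4,\ldots,n-1\}$ yields a permutation avoiding $4321$. This gives $2^{n-4}$, and I will verify it by covering $\mathcal{S}_\pi$ with three chains, namely the positions of the $A$-letters, the positions of the $B$-letters, and a chain through the small values $1,2,3$.

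The main obstacle is the one-line analysis. Translating the cycle form $(1,3,A,2,B)$ into positions of a decreasing subsequence is messy, and the choice of chains for Lemma \ref{lem2.4} must be made uniformly in $A$. I would first compute small cases, $n=6,7$, by hand to confirm which case gives $2^{n-4}$. I would then guess the chain decomposition from those examples before writing the general argument.
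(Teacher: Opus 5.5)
Your reduction to $C(\pi)=(1,3,A,2,B)$ with $A,B$ increasing, and your split according to whether $n$ ends $A$ or ends $B$, is the right framework; it is essentially the paper's. However, you have attached the two counts to the wrong cases, so both concrete claims you set out to prove are false. Already for $n=6$, the form $(1,3,4,6,2,5)$ has $n$ at the end of $A$ and gives $\pi=354612$, whose longest decreasing subsequence has length $3$. Yet it is neither $(1,3,4,5,6,2)$ nor a ``step-two'' form. In fact all four forms $(1,3,A',6,2,B)$ avoid $4321$. Conversely, $(1,3,4,2,5,6)$ ends with $n$ and gives $\pi=354261$, which contains $5421$. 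So no three-chain cover of the kind you describe exists in that case, and no doubling recursion can hold there. The analogy with Lemma~\ref{lem2.6} is what misleads you: in passing from $\delta_3$ to $\delta_4$ the roles of the two cases are exchanged. Each case has exactly $2^{n-4}$ candidate forms, so swapping the counts leaves the total unchanged. That is why the error does not show up in the final formula.

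The correct picture is as follows.
\begin{itemize}
\item When $n$ immediately precedes $2$, all $2^{n-4}$ choices survive. Three chains cover $\mathcal{S}_\pi$, so Lemma~\ref{lem2.4} applies: the positions $\{1,3\}\cup A'$ (values $3$, then $A'$, then $n$); the positions in $\{2\}\cup B$ other than $c_n$ (values the letters of $B$); and $\{c_n,n\}$ (values $1,2$).
\item When $C(\pi)$ ends with $n$, you have $\pi_1=3$, $\pi_n=1$, and the value $2$ at a position $\pi^{-1}(2)<n$. Hence all entries before $\pi^{-1}(2)$ must increase, and this is the rigid case. Split it by where $n-1$ sits.
\begin{itemize}
\item If $n-1$ immediately precedes $2$, then $\pi_{n-1}=2$ and $\pi_n=1$ force $\pi=34\cdots n21$. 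This is an $n$-cycle exactly when $n$ is even.
\item If $n-1$ immediately precedes $n$, Lemma~\ref{lem3.1} gives $b_n=b_{n-1}+\frac{1+(-1)^{n-1}}{2}$ with $b_5=1$, so $b_n=\lfloor\frac{n-3}{2}\rfloor$.
\end{itemize}
Together these subcases contribute $\lfloor\frac{n-2}{2}\rfloor$.
\end{itemize}
Your own hedge (``in some order'', checking $n=6,7$ first) would have caught this. As written, though, the two main arguments of your plan aim at statements that are false.
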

\noindent \textbf{\textit{Proof}}
Let $\pi \in \mathcal{A}_{n,3}^\circ(\delta_4;1432)$ with $C(\pi)=(1,3,\ldots,2,\ldots)$.
By Lemma \ref{lem2.2}, the letters between $3$ and $2$ appear in increasing order, and the same holds for the letters after $2$.
Thus $C(\pi)$ falls into one of the following cases.

\noindent(i)   $C(\pi)=(1,3,\ldots,n-1,2,\ldots,n)$

Consider the one-line notation.
Since $\pi_{n-1}=2$, $\pi_{n}=1$, $\pi$ has to be $345\cdots n21$.
If $n$ is odd, then the cycle form of $\pi$ contains $(1,3,5,\ldots,n)$, contradicting that $\pi$ is cyclic.
If $n$ is even, then there is exactly one $\pi$ with $C(\pi)=(1,3,5,\ldots,n-3,n-1,2,4,\ldots,n-2,n)$.
Thus, the number of such permutations is $\frac{1+(-1)^n}{2}$.

\noindent(ii)  $C(\pi)=(1,3,\ldots,2,\ldots,n-1,n)$

Define $b_n=|\{\pi\in \mathcal{A}_{n,3}^\circ(\delta_4;1432) \mid C(\pi)=(1,3,\ldots,2,\ldots,n-1,n)\}|$.
By Lemma \ref{lem3.1}, we have the recurrence
\[
b_n=b_{n-1}+\frac{1+(-1)^{n-1}}{2}.
\]
Since $b_5=1$, we get $b_n=\lfloor \frac{n-3}{2} \rfloor$.

\noindent(iii) $C(\pi)=(1,3,c_3,\ldots,n,2,\ldots,c_n)$

It is straightforward to check that chains
\[\{1<_{\pi}3<_{\pi}c_3<_{\pi}\cdots<_{\pi}\pi^{-1}(n)\},\]

\[\{2<_{\pi}\pi_2<_{\pi}\cdots<_{\pi}c_{n-2}<_{\pi}c_{n-1}\}\]
and
\[\{c_n<_{\pi} n\}\]
cover the poset $\mathcal{S}_\pi$.
Therefore, $\pi$ avoids $\delta_4$ in one-line notation by Lemma \ref{lem2.4}.
Each of the remaining $n-4$ letters can be placed in one of two ways: (i) inserted (in increasing order) between $3$ and $n$, or (ii) placed (in increasing order) after $2$.
Thus there are $2^{n-4}$ such cyclic permutations.

So we conclude that $a_{n,3}^\circ(\delta_4;1432)=2^{n-4}+\frac{1+(-1)^n}{2}+\lfloor \frac{n-3}{2} \rfloor=2^{n-4}+\lfloor \frac{n-2}{2} \rfloor$.
$\hfill\square$

\begin{lem}\label{lem3.4}
For $n\geq6$, $\sum_{j=4}^n a_{n,j}^\circ(\delta_4;1432)=3\times2^{n-4}-n+\lfloor \frac{n}{2} \rfloor$.
\end{lem}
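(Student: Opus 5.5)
I will prove Lemma \ref{lem3.4} by computing $a_{n,j}^\circ(\delta_4;1432)$ separately for each $4\le j\le n$ and summing. Lemma \ref{lem2.2} fixes the shape of the standard cycle form:
\[
C(\pi)=(1,j,c_3,\ldots,2,\ldots,3,\ldots,j-1,\ldots).
\]
Here the letters $2,\ldots,j-1$ appear in increasing order. The letters greater than $j$ fall into three blocks: the block between $j$ and $2$, the block between $2$ and $j-1$, and the block after $j-1$. The large letters after $2$ are increasing, and the large letters before $j-1$ are increasing, so the large letters are increasing inside each block and across the middle and last blocks.

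\textbf{Using $\delta_4$.} In one-line notation $\pi_1=j$ and $\pi_{c_n}=1$. The sequence $\pi_2\pi_3\cdots\pi_{j-2}$ consists of the successors of $2,\ldots,j-2$. Reasoning as in Lemma \ref{lem2.7}, an entry $\pi_{i}=i+1$ with $i\in[2,j-2]$ gives the decreasing pattern $j,(i+1),1$. With one more level available, a $4321$ needs one further descent. I therefore expect the constraints to say that consecutive small letters $i,i+1$ with $2\le i\le j-2$ are adjacent in $C(\pi)$ for at most a bounded number of indices. Likewise the big letters inserted between small letters must be limited, since every large $\pi_i$ sitting before a smaller later image threatens a $4321$.

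\textbf{Case split.} Following Lemma \ref{lem3.3}, I split on where $n$ sits.
\begin{enumerate}
\item[(a)] $C(\pi)$ ends in $n-1,n$. Lemma \ref{lem3.1} gives a bijection to the corresponding class for $n-1$, which yields a recurrence in $n$ for fixed $j\le n-2$.
\item[(b)] $n$ is the last letter and $n-1$ is not directly before it.
\item[(c)] $n$ lies before $j-1$, in the block between $j$ and $2$ or between $2$ and $j-1$. Here the arrangement should be close to the rigid form found in Lemma \ref{lem2.7}, but each large letter gets a binary choice of block.
\end{enumerate}
In case (c) I will exhibit three explicit chains covering $\mathcal{S}_\pi$, modelled on the chains in Lemmas \ref{lem2.7} and \ref{lem3.3}, and apply Lemma \ref{lem2.4}. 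Conversely, I will rule out the other placements by exhibiting explicit $4321$ patterns, using $\pi_1=j$, the position $c_n$ where $\pi_{c_n}=1$, and the entry $\pi_{j-2}$ as the anchors. I also expect the cyclic condition to cause parity effects, as in Lemma \ref{lem2.6}, when the large letters alternate with the small ones. These are the source of the floor terms.

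\textbf{Summing.} Rather than finding each $a_{n,j}$ in closed form, it may be easier to set $T_n=\sum_{j=4}^n a_{n,j}^\circ(\delta_4;1432)$ and prove a recurrence of the form
\[
T_n=2T_{n-1}+(\text{correction linear in } n \text{ with parity terms}).
\]
The doubling comes from the binary choice of block for the letter $n$, and Lemma \ref{lem3.1} supplies the part where $n$ is appended. The recurrence can be checked against the target $3\cdot 2^{n-4}-n+\lfloor n/2\rfloor$, whose increment is $3\cdot 2^{n-5}-1+[n\text{ even}]$, and the base case $n=6$ verified by hand.

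The main obstacle is characterizing exactly which placements of the large letters avoid $4321$ in one-line notation and which of these give a single $n$-cycle. That is where the boundary cases $j=n-1$ and $j=n$ (excluded from Lemma \ref{lem3.1}) and the parity corrections arise. I will first work out small $n$ by hand to guess the per-$j$ counts before attempting the general chain covers.
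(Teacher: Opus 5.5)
There is a genuine gap. What you have written is a plan rather than a proof: none of your cases (a)--(c) is actually characterized or counted, and the heuristics meant to guide the computation point the wrong way.

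\textbf{Where the rigidity and parity actually live.} Your expectation that adjacencies $m,m+1$ ($2\le m\le j-2$) in $C(\pi)$ are bounded in number is false. For $4\le j\le n-1$, the form $C(\pi)=(1,j,j+1,\ldots,n,2,3,\ldots,j-1)$ satisfies Lemma~\ref{lem2.2} and has $j-3$ such adjacencies, yet its one-line form $j\,3\,4\cdots(j-1)\,1\,(j+1)\cdots n\,2$ avoids $4321$. The $\delta_4$ analogue of the obstruction in Lemma~\ref{lem2.7} is $\pi_1\pi_m\pi_{p}\pi_{c_n}=j\,(m+1)\,2\,1$, where $p\ge j$ is the predecessor of $2$ in $C(\pi)$. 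It requires $p<c_n$, which holds exactly when $n$ is the last letter of $C(\pi)$ and fails throughout your case (c). So the rigidity and the parity terms live in case (b), which you never analyze. There every adjacency is excluded, and one is forced to $C(\pi)=(1,j,2,j+1,3,\ldots,j-2,n-1,j-1,n)$, i.e.\ $\pi=j(j+1)\cdots n\,23\cdots(j-1)\,1$. This occurs only for $n=2j-2$ and contributes $\frac{1+(-1)^n}{2}$. Case (a) then inherits it through Lemma~\ref{lem3.1} and sums to $\lfloor\frac{n-5}{2}\rfloor$.

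\textbf{What case (c) actually counts.} Case (c) is not ``close to the rigid form of Lemma~\ref{lem2.7}''. It carries the exponential part and has no $4321$ exclusions at all, but your ``binary choice of block'' count is right in only one subcase. By Lemma~\ref{lem2.2}(2),(3), if $n$ is not last then no large letter lies between $n$ and $j-1$, and if $n$ follows $2$ then no large letter follows $n$ at all. Hence there are two subcases:
\begin{enumerate}
\item[(1)] $n$ immediately precedes $2,3,\ldots,j-1$, and each of $j+1,\ldots,n-1$ goes either between $j$ and $n$ or after $j-1$. This gives $2^{n-j-1}$ forms.
\item[(2)] $C(\pi)$ ends with $n,i+1,\ldots,j-1$ for some $i\in[2,j-2]$, and $j+1,\ldots,n-1$ are spread over the $i$ gaps between $j,2,3,\ldots,i,n$. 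By stars and bars this gives $\sum_{i=2}^{j-2}\binom{n-j+i-2}{i-1}=\binom{n-3}{j-3}-1$ forms, which is not a power of $2$.
\end{enumerate}
You must still prove that all of these avoid $4321$. The paper does this with three-chain covers and Lemma~\ref{lem2.4}, splitting $[2,i]$ and $[j,n-1]$ by whether the image is below $i+1$. You must also handle $j=n-1$ separately: all $n-2$ placements of $n$ work except $(1,n-1,2,\ldots,n-2,n)$, giving $n-3$. For $j=n$ there is exactly one permutation.

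\textbf{The summation step.} Summing these counts directly over $j$ gives $3\cdot2^{n-4}-n+\lfloor n/2\rfloor$. Your recurrence $T_n=2T_{n-1}+\cdots$ has no combinatorial mechanism behind it: in case (c) the letter $n$ has $j-2$ possible positions, not two. As stated it is only a numerical identity satisfied by the target and cannot replace the case analysis.
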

\noindent \textbf{\textit{Proof}}
Let $\pi \in \mathcal{A}_{n,j}^\circ(\delta_4;1432)$ with $4\le j \le n$, then by Lemma \ref{lem2.2},
\[
C(\pi)=(1,j,c_3,\ldots,2,\ldots,3,\ldots,j-2,\ldots,j-1,\ldots,c_n).
\]
Moreover, the letters between $j$ and $j-1$ that are greater than $j$ must appear in increasing order; the same holds for the letters after $2$ that are greater than $j$.

Therefore, for $4 \le j \le n-2$, $C(\pi)$ is one of the following four cases.

\noindent (i)   $C(\pi)=(1,j,\ldots,2,\ldots,i,\ldots,n-1,i+1,i+2,\ldots,j-1,n), \ 1\leq i\leq j-2$

We show that the only possibility is $C(\pi)=(1,j,2,j+1,3,j+2,\ldots,j-2,n-1,j-1,n)$.

Note that for any $m \in [2,j-2]$, $m$ and $m+1$ cannot be adjacent in $C(\pi)$.
Suppose not.
Then there exists $m_0 \in [2,j-2]$ such that $\pi_{m_0}=m_0+1$.
In this case, $\pi_1\pi_{m_0}2\pi_n=j(m_0+1)21$ would be a $\delta_4$ pattern in one-line notation (note that the preimage of $2$ is not less than $j$).
Thus we have $i+1=j-1$ which implies that $C(\pi)=(1,j,\ldots,2,\ldots,j-2,\ldots,n-1,j-1,n)$.

Consider the one-line notation of $\pi$, i.e. $\pi=j\cdots n\cdots(j-1)1$.
It is clear that all the entries that lie in $[j+1,n-1]$ occur before $n$ and appear in increasing order.
Likewise, all the entries lying in $[2,j-2]$ occur after $n$ and are in increasing order in the one-line notation, since $m$ and $m+1$ are not adjacent when $m \in [2,j-2]$ in $C(\pi)$.
Therefore, $\pi=j(j+1)\cdots n23\cdots(j-1)1$ with $C(\pi)=(1,j,2,j+1,3,j+2,\ldots,j-2,n-1,j-1,n)$ which implies $j= \frac{n+2}{2}$.
So the number of such cyclic permutations is $\frac{1+(-1)^n}{2}$ as $j$ ranges from $4$ to $n-2$.

\noindent (ii)  $C(\pi)=(1,j,\ldots,2,\ldots,j-1,\ldots,n-1,n)$

Define $d_n=\sum\limits_{j=4}^n|\{\pi\in \mathcal{A}_{n,j}^\circ(\delta_4;1432) \mid C(\pi)=(1,j,\ldots,2,\ldots,j-1,\ldots,n-1,n)\}|$.
By Lemma \ref{lem3.1}, we have the recurrence
\[
d_n=d_{n-1}+\frac{1+(-1)^n}{2}.
\]
Since $d_6=0$, we get $d_n=\lfloor \frac{n-5}{2} \rfloor$.

\noindent (iii) $C(\pi)=(1,j,c_3,\ldots,n,2,3,\ldots,j-1,\ldots,c_n)$

In this case, the letters between $j$ and $n$ appear in increasing order; the same holds for the letters after $j-1$.
Since
\[\{1<_{\pi} j<_{\pi}c_3<_{\pi}c_4<_{\pi}\cdots<_{\pi}\pi^{-1}(n)\},\]

\[\{2<_{\pi}3<_{\pi}\cdots<_{\pi}j-1<_{\pi}\pi_{j-1}<_{\pi}\cdots<_{\pi}c_{n-1}\}\]
and
\[\{c_n<_{\pi}n\}\]
cover the poset $\mathcal{S}_\pi$, $\pi$ avoids $\delta_4$ in one-line notation by Lemma \ref{lem2.4}.
There are $2^{n-j-1}$ such cyclic permutations.

\noindent (iv)  $C(\pi)=(1,j,c_3,\ldots,2,\ldots,i,\ldots,n,i+1,i+2,\ldots,j-1)$ for some $i \in [2,j-2]$

In this case, all the letters lying in $[j+1,n-1]$ occur between $j$ and $n$ and are in increasing order.

Let $r_1<r_2<\cdots<r_p$ be the elements of $[2,i]$ whose images under $\pi$ are less than $i+1$, and let $s_1<\cdots<s_{i-1-p}$ be the remaining elements of $[2,i]$.
Likewise, let $u_1<u_2<\cdots<u_q$ be the elements of $[j,n-1]$ whose images under $\pi$ are less than $i+1$, and let $v_1<\cdots<v_{n-j-q}$ be the remaining elements of $[j,n-1]$.

Then chains
\[\{1<_{\pi}s_1<_{\pi}s_2<_{\pi}\cdots<_{\pi}s_{i-1-p}\},\]

\[\{r_1<_{\pi}r_2<_{\pi}\cdots<_{\pi}r_p<_{\pi}i+1<_{\pi}\cdots<_{\pi}j-2<_{\pi}v_1<_{\pi}v_2<_{\pi}\cdots<_{\pi}v_{n-j-q}\}\]
and
\[\{j-1<_{\pi}u_1<_{\pi}\cdots<_{\pi}u_q<_{\pi}n\}\]
cover $\mathcal{S}_{\pi}$ and thus $\pi$ avoids $\delta_4$ in one-line notation by Lemma \ref{lem2.4}.
It is not hard to see that the number of such $C(\pi)$ equals the number of non-negative solutions to $x_1+x_2+\cdots+x_{j-2}=n-j$ minus $1$.
Namely $\binom{n-3}{j-3}-1$ (the solution $x_1=n-j$, $x_2=x_3=\cdots=x_{j-2}=0$ corresponds to the case $C(\pi)=(1,j,j+1,\ldots,n,2,3,\ldots,j-1)$, which is classified as case (iii)).

Note that the proofs of cases (iii) and (iv) also apply when $j=n-1$.
Consequently, among the resulting $n-2$ cyclic permutations, all avoid $\delta_4$ in one-line notation except for $(1,n-1,2,3,\ldots,n-2,n)$.

For $j=n$, there is only one permutation in $\mathcal{A}_{n,n}^\circ(\delta_4;1432)$, which is $(1,n,2,3,\ldots,n-1)$.

Therefore, we conclude that
\begin{align*}
\sum_{j=4}^n a_{n,j}^\circ(\delta_4;1432)
&=\frac{1+(-1)^n}{2}+\lfloor \frac{n-5}{2} \rfloor+\sum_{j=4}^{n-2} [2^{n-j-1}+\binom{n-3}{j-3}-1]+n-3+1\\
&= 3\times2^{n-4}-n+\lfloor \frac{n}{2} \rfloor.
\end{align*}
$\hfill\square$

\textbf{\textit{Proof of Theorem \ref{th1.2}}}
Combining equation \ref{eq1.1}, Proposition \ref{prop3.2}, Lemma \ref{lem3.3} and Lemma \ref{lem3.4}, we obtain
\begin{align*}
a_n^\circ(\delta_4;1432)&= a_{n-1}^\circ(\delta_4;1432)+2^{n-4}+\lfloor \frac{n-2}{2} \rfloor+3\times2^{n-4}-n+\lfloor \frac{n}{2} \rfloor\\
&= 2^{n-1}-\lfloor \frac{3n-5}{2} \rfloor, \quad n\geq6.
\end{align*}
For $n=5$, it is straightforward to verify that $a_5^\circ(\delta_4;1432)=11$.
$\hfill\square$

\section{Proof of Theorem \ref{th1.3}}

\begin{lem}\label{lem4.1}
Let $n \geq 5$ and $\pi\in\mathfrak{S}_n$ be cyclic.
If the standard cycle form $C(\pi)$ avoids both $321$ and $2143$, then the one-line notation of $\pi$ avoids $\delta_5$.
Consequently, $\pi$ avoids $\delta_k$ for all $k\ge5$ in one-line notation.
\end{lem}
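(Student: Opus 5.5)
I would prove the first assertion by covering the poset $\mathcal{S}_\pi$ with four chains and then invoking Lemma \ref{lem2.4} with $k=5$. The second assertion then follows at once, since any occurrence of $\delta_k$ with $k\ge5$ contains an occurrence of $\delta_5$.

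\emph{Reduction to $j\ge3$.} I would induct on $n$ and split according to $j=c_2$. If $j=2$, the argument of Lemma \ref{lem2.3} applies. The entry $\pi_1=2$ cannot lie in a decreasing subsequence of length at least $3$. Deleting $2$ from $C(\pi)$ and standardizing preserves avoidance of $321$ and $2143$, so this case reduces to $n-1$. For the base cases ($n$ small, e.g.\ $n=5$, where $\delta_5$ would force $\pi=54321$, which is not cyclic) I would do a direct check.

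\emph{Structure for $j\ge3$.} Call the letters in $[2,j-1]$ \emph{small} and those in $[j,n]$ \emph{large}. Lemma \ref{lem2.2} gives the following picture.
\begin{itemize}
\item The small letters occur in $C(\pi)$ in increasing order.
\item The large letters other than $j$ split into three blocks: $B_1$ (between $j$ and $2$), $B_2$ (between $2$ and $j-1$) and $B_3$ (after $j-1$).
\item Conditions (2) and (3) of Lemma \ref{lem2.2} say exactly that $B_1B_2$ is increasing and $B_2B_3$ is increasing.
\end{itemize}
Since $\pi(x)$ is the successor of $x$ in $C(\pi)$, I would sort the positions $x\in[n]$ by the type of $x$ and of its successor. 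For each class I would check that larger $x$ has larger $\pi(x)$, so that the class is a chain in $\mathcal{S}_\pi$.
\begin{itemize}
\item[(a)] Small $x$ with small successor. Here $\pi(x)=x+1$, which is a chain.
\item[(b)] Small $x$ with large successor. Both $x$ and $\pi(x)$ increase along the cycle, since the successors lie in $B_2\cup B_3$, which is read in increasing order.
\item[(c)] Large $x$ with small successor. The successors are small and appear in increasing cycle order. The relevant $x$ lie in $B_1\cup B_2\cup\{j\}$ or are the last letter of $B_3$, and the first group is increasing in cycle order.
\item[(d)] Large $x$ with large successor, split into the part lying in $B_1\cup\{j\}$ and the part lying in $B_2\cup B_3$. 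Each part is a chain because each is an increasing run whose successors increase as well.
\end{itemize}
The position $1$, whose image is $j$, together with the last letter $c_n$, whose image is $1$, would be attached to suitable chains by hand.

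\emph{Main obstacle.} This partition naively yields about five or six chains, so the real work is merging classes to reach four. I would first try to merge (a) with (b). Both are indexed by small $x$, and I expect the interleaving $x\to x+1$ versus $x\to$ (large letter) to be order-consistent, because every large successor exceeds every small one. Then I would try to merge (c) with the $B_1$-part of (d). If a direct merge fails, I would fall back on showing that an antichain of size $5$ meets at most one class in each of two pairs of classes that are "comparable blockwise". For example, every element of (a)$\cup$(b) has position less than $j$, while every element of the large classes has position at least $j$. So a decreasing subsequence takes its small-position part before its large-position part, and values must then drop from small-type images to large-type images. I expect this to limit each side to two elements, giving at most $4$ in total, which is the contradiction needed.
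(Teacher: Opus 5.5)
\textbf{Verdict: there is a genuine gap.} Your $j=2$ reduction is sound, and each of your classes (a)--(d) is a chain in $\mathcal{S}_\pi$ once $c_n$ is set aside. The step that carries the lemma is left open: going from these five chains plus positions $1$ and $c_n$ down to four, or otherwise bounding antichains by $4$. The specific moves you suggest fail.
\begin{enumerate}
\item[(1)] \emph{Merging (a) with (b).} This fails precisely because every large successor exceeds every small one, so a type-(b) position that precedes a type-(a) position is an inversion. For example, $C(\pi)=(1,5,2,6,3,4,7)$ satisfies the conditions of Lemma~\ref{lem2.2} and gives $\pi=5647231$. There $\pi(2)=6>4=\pi(3)$, with $2$ in (b) and $3$ in (a).
\item[(2)] \emph{Merging (c) with the $B_1$-part of (d).} This fails whenever $B_1\neq\emptyset$: position $j$ maps into $B_1$, while the larger position $\max B_1$ maps to $2$.
\item[(3)] \emph{The fallback count.} For $C(\pi)=(1,4,5,2,3,6)$ one gets $\pi=436521$. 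Its decreasing subsequence $6521$ has three entries at positions $\ge j=4$, so ``two per side'' is false. Moreover, position $1$, which starts the decreasing subsequence $4321$ there, lies on neither side.
\end{enumerate}

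The real difficulty sits in the position you defer ``by hand'', namely $c_n$, whose image is $1$. Avoidance of $321$ alone already covers every other position by four chains. Split the word $C(\pi)$ into two increasing subsequences, and sort the adjacent pairs $(c_i,c_{i+1})$, $i<n$, by which subsequence contains each coordinate; each of the four classes is a chain. The point $(c_n,1)$, however, can only be the bottom of a chain whose other members sit at positions larger than $c_n$. When $c_n=n$ it is incomparable to everything, so the other $n-1$ positions must fit into three chains. This is exactly where $2143$-avoidance has to be used, and your plan never uses it there.

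The paper avoids chain covers altogether. Suppose $\delta_5$ occurs at positions $x_1<\dots<x_5$; then in $C(\pi)$ each $x_i$ is immediately followed by $\pi(x_i)$. If no pair wraps around, the cycle order of the pairs is some $\tau\in\mathfrak{S}_5$ that avoids $321$ (from the $x_i$) and $123$ (from their images). This contradicts Erd\H{o}s--Szekeres. If $\pi(x_5)=1$, the other four pairs give $\sigma\in\{2143,2413,3142,3412\}$, and each case produces a $321$ or a $2143$ in $C(\pi)$.
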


\noindent \textbf{\textit{Proof}}
It suffices to show that if there exist $1 \le x_1 < x_2 < x_3 < x_4 < x_5 \le n$ such that $y_5 > y_4 > y_3 > y_2 > y_1$, then $C(\pi)$ contains a $321$ or $2143$ pattern, where $y_{6-i}=\pi_{x_i}$.

Assume $y_1>1$. Then there exists a permutation $\tau\in\mathfrak{S}_5$ such that
\[
C(\pi)=(1,\ldots,x_{\tau_1}, y_{6-\tau_1},\ldots,x_{\tau_2}, y_{6-\tau_2},\ldots,x_{\tau_3}, y_{6-\tau_3},\ldots,x_{\tau_4}, y_{6-\tau_4},\ldots,x_{\tau_5}, y_{6-\tau_5},\ldots)
\]
with possibly $1=x_{\tau_1}$.

Since $C(\pi)$ avoids $321$, the sequence $x_{\tau_1}x_{\tau_2}x_{\tau_3}x_{\tau_4}x_{\tau_5}$ has to avoid $321$.
Moreover, $y_{6-\tau_1}\allowbreak y_{6-\tau_2}y_{6-\tau_3}y_{6-\tau_4}\allowbreak y_{6-\tau_5}$ has to also avoid $321$, which implies that $x_{\tau_1}x_{\tau_2}x_{\tau_3}x_{\tau_4}x_{\tau_5}$ avoids $123$.
Therefore, $\tau \in \mathfrak{S}_5$ avoids both $321$ and $123$.
However, this is impossible by the Erd\H{o}s--Szekeres theorem \cite{Erdos} or direct verification.

Now, assume $y_1=1$ so that
\[
C(\pi)=(1,\ldots,x_{\sigma_1}, y_{6-\sigma_1},\ldots,x_{\sigma_2}, y_{6-\sigma_2},\ldots,x_{\sigma_3}, y_{6-\sigma_3},\ldots,x_{\sigma_4}, y_{6-\sigma_4},\ldots,x_5)
\]
for some $\sigma \in \mathfrak{S}_4$.

As noted above, $\sigma$ has to avoid both $321$ and $123$.
The only four such permutations in $\mathfrak{S}_4$ are $2143,2413,3142$ and $3412$.
We show that for any $\sigma \in \{2143,2413,3142,\allowbreak3412\}$, $C(\pi)$ contains either a $321$ or a $2143$ pattern.

\noindent (i)   $\sigma=2143$

It is evident that $x_2x_1x_4x_3$ forms a $2143$ pattern.

\noindent (ii)  $\sigma=2413$

In this case, $C(\pi)=(1,\ldots,x_2,y_4,\ldots,x_4,y_2,\ldots,x_1,y_5,\ldots,x_3,y_3,\ldots,x_5)$.
If $y_5>x_4$, then $x_2x_1y_5x_3$ forms a $2143$ pattern; otherwise $x_4y_5y_3$ forms a $321$ pattern.

\noindent (iii) $\sigma=3142$

In this case, $C(\pi)=(1,\ldots,x_3,y_3,\ldots,x_1,y_5,\ldots,x_4,y_2,\ldots,x_2,y_4,\ldots,x_5)$.
If $y_2>x_1$, then $y_3x_1y_5y_4$ forms a $2143$ pattern; otherwise $x_3x_1y_2$ forms a $321$ pattern.

\noindent (iv) $\sigma=3412$

It is evident that $y_3y_2y_5y_4$ forms a $2143$ pattern.
$\hfill\square$

\noindent \textbf{\textit{Proof of Theorem \ref{th1.3}}}
In \cite{Callan,Vella}, Callan and Vella found that the total number of cyclic permutations in $\mathfrak{S}_n$ all of whose standard cycle forms avoid $321$ and $2143$ is equal to $2^n+1-2n-\binom{n}{3}$.
Then the result follows immediately from Lemma \ref{lem4.1}.
$\hfill\square$

In conclusion, we derived the explicit formula for $a_n^\circ(\delta_k;1432)$ for all $n\geq5$ and $k\geq3$.

\section*{Acknowledgments}
The authors thank Qiongqiong Pan for proposing the problem and Feng Zhao for helpful discussions.

%\section{Reference}


\begin{thebibliography}{99}
\bibitem{Archer1}
K. Archer, E. Borsh, J. Bridges, C. Graves, M. Jeske,
Cyclic permutations avoiding patterns in both one-line and cycle forms,
Australas. J. Combin. 94 (1) (2026) 50--92.
\bibitem{Archer2}
K. Archer, E. Borsh, J. Bridges, C. Graves, M. Jeske,
Pattern-restricted one-cycle permutations with a pattern-restricted cycle form,
Enumer. Comb. Appl. 5 (1) (2025) S2R3.
\url{https://doi.org/10.54550/ECA2025V5S1R3}.
\bibitem{Callan}
D. Callan,
Pattern avoidance in circular permutations,
arXiv:math/0210014, 2002, preprint.
\url{https://doi.org/10.48550/arXiv.math/0210014}.
\bibitem{Dilworth}
R.P. Dilworth,
A decomposition theorem for partially ordered sets,
Ann. Math. (2) 51 (1) (1950) 161--166.
\url{https://doi.org/10.2307/1969503}.
\bibitem{Erdos}
P. Erd\H{o}s, G. Szekeres,
A combinatorial problem in geometry,
Compos. Math. 2 (1935) 463--470.
\bibitem{Knuth}
D.E. Knuth,
The Art of Computer Programming, Vol.~1: Fundamental Algorithms,
Addison-Wesley, Reading, MA, 1968.
\bibitem{PanJ}
J. Pan,
On a conjecture about pattern avoidance of cyclic permutations,
Bull. Malays. Math. Sci. Soc. 48 (2025) 77.
\url{https://doi.org/10.1007/s40840-025-01859-9}.
\bibitem{PanQ}
Q. Pan,
Pattern avoidance of cyclic permutations,
Bull. Malays. Math. Sci. Soc. 48 (2025) 213.
\url{https://doi.org/10.1007/s40840-025-01998-z}.
\bibitem{Simion}
R. Simion, F.W. Schmidt,
Restricted permutations,
Eur. J. Comb. 6 (4) (1985) 383--406.
\url{https://doi.org/10.1016/S0195-6698(85)80052-4}.
\bibitem{Vella}
A. Vella,
Pattern avoidance in permutations: linear and cyclic orders,
Electron. J. Comb. 9 (2) (2003) R18.
\url{https://doi.org/10.37236/1690}.
\end{thebibliography}
\end{document}